\documentclass[a4paper]{amsart}
\usepackage{graphicx}
\usepackage{amssymb}
\usepackage{amsmath}
\usepackage{amsthm}
\usepackage{amscd}
\usepackage[all,2cell]{xy}
\usepackage{etex}
\usepackage{tikz}
\usetikzlibrary {arrows.meta}
\usepackage[pagebackref,colorlinks]{hyperref}
\usepackage{float}

\UseAllTwocells \SilentMatrices
\newtheorem{thm}{Theorem}[section]
\newtheorem{alphatheorem}{Theorem}

\newtheorem{cor}[thm]{Corollary}

\newtheorem{lem}[thm]{Lemma}
\newtheorem{exm}[thm]{Example}

\newtheorem{prop}[thm]{Proposition}

\theoremstyle{definition}
\newtheorem{defn}[thm]{Definition}
\theoremstyle{remark}
\newtheorem{rem}[thm]{\bf Remark}
\numberwithin{equation}{section}

\begin{document}
\title[Frobenius quotients, inflation categories and wPL]{Frobenius quotients, inflation categories and weighted projective lines}
\author[Xiao-Wu Chen, Qiang Dong, Shiquan Ruan] {Xiao-Wu Chen, Qiang Dong, Shiquan Ruan}

\makeatletter
\@namedef{subjclassname@2020}{\textup{2020} Mathematics Subject Classification}
\makeatother

\date{\today}
\subjclass[2020]{18G65, 18G25, 16W50, 18G05}

\thanks{E-mail: xwchen$\symbol{64}$mail.ustc.edu.cn, dongqiang@fjnu.edu.cn, 
sqruan@xmu.edu.cn}
\keywords{Frobenius quotient, inflation category, weighted projective line, Gorenstein-projective module, projective-module factorization}

\begin{abstract}
We propose the notion of Frobenius quotients between Frobenius exact categories. It turns out that any Frobenius quotient induces  Frobenius quotients between the corresponding inflation categories. We obtain an explicit Frobenius quotient from the category of vector bundles on weighted projective lines with three weights to a certain category consisting of  monomorphism grids. 
\end{abstract}

\maketitle

\dedicatory{}%
\commby{}%

\section{Introduction}

Weighted projective lines \cite{GL} arise naturally in the geometric study of canonical algebras \cite{Rin}. They are closely related to hereditary abelian categories, algebras of automorphic forms and preprojective algebras; see \cite{Len}.  The work \cite{KLM2} establishes a surprising connection between weighted projective lines and the Birkhoff-type problem \cite{Birk, Arn}. 

To be more precise, we denote by ${\rm vect}\mbox{-}\mathbb{X}(2,3,r)$ the category of vector bundles on the weighted projective line of weight type $(2,3,r)$. Denote by $\mathcal{S}^\mathbb{Z}(r)$ the category of graded 
 invariant subspaces of nilpotent operators with nilpotency index at most $r$, that is, the monomorphism category \cite{RS08, Sim} of $\mathbb{Z}$-graded modules over the truncated polynomial algebra $\mathbb{K}[t]/(t^r)$. Both ${\rm vect}\mbox{-}\mathbb{X}(2,3,r)$ and $\mathcal{S}^\mathbb{Z}(r)$ are naturally Frobenius exact categories \cite{Qui73, Hap}. The main result in \cite{KLM2} states that there is a certain quotient functor  
 $$F^{\rm KLM}\colon {\rm vect}\mbox{-}\mathbb{X}(2,3,r)\longrightarrow \mathcal{S}^\mathbb{Z}(r),$$
 which induces a stable equivalence. 

We mention that the quotient functor $F^{\rm KLM}$ is rather mysterious, whose construction is somewhat  indirect; compare \cite{Chen12}. Moreover, as mentioned in \cite{KLM1, Sim15}, such a quotient functor might exist in a slightly more general setting, that is, there is  a quotient functor 
 $${\rm vect}\mbox{-}\mathbb{X}(p,q,r)\longrightarrow \mathcal{S}_{p-1, q-1}^\mathbb{Z}(r).$$
 Here, $\mathbb{X}(p,q,r)$ denotes the weighted projective line of weight type $(p,q, r)$,  and $\mathcal{S}_{p-1, q-1}^\mathbb{Z}(r)$ denotes the category of $(p-1)\times (q-1)$-grids consisting of certain monomorphisms between graded modules over $\mathbb{K}[t]/(t^r)$; see Example~\ref{exm:grid}. We emphasize that there is no detailed proof about the existence in the literature. 

 The goal of this work is to confirm the existence of such a quotient functor; see Theorem~\ref{thm:WPL}. 

\vskip 5pt
\begin{alphatheorem} 
\label{thm:A}
 There is an explicit Frobenius quotient functor
 $${\rm vect}\mbox{-}\mathbb{X}(p,q,r)\longrightarrow \mathcal{S}_{p-1, q-1}^\mathbb{Z}(r),$$
 whose essential kernel equals 
 $${\rm add}\; \{\mathcal{O}(i\vec{x}+n\vec{z}), \mathcal{O}(j\vec{y}+n\vec{z})\; |\; 0\leq i\leq p-1, 0\leq j\leq q-1, n\in \mathbb{Z}\}.$$
\end{alphatheorem}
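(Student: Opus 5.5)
The strategy is to obtain the functor by composing Frobenius quotients, using the principle from the abstract: a Frobenius quotient induces Frobenius quotients between the corresponding inflation categories. I would start from the base case with one weight. The category ${\rm vect}\mbox{-}\mathbb{X}(r)$ (equivalently, graded CM/Gorenstein-projective modules over the $\mathbb{L}$-graded ring $\mathbb{K}[X_1,X_2,X_3]/(X_3^r - X_1^{?}\dots)$ in the reduced form) should map by a Frobenius quotient onto ${\rm mod}^{\mathbb{Z}}\mbox{-}\mathbb{K}[t]/(t^r)$. I would construct this map via projective-module factorizations of $t^r$, as suggested by the keywords. Concretely, I would use the description of ${\rm vect}\mbox{-}\mathbb{X}(p,q,r)$ as graded maximal Cohen--Macaulay modules over $S=\mathbb{K}[X,Y,Z]/(X^p+Y^q-Z^r)$ (after a change of normalization of the relation). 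These are the graded matrix factorizations of $Z^r-X^p-Y^q$.

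The key structural step is to realize ${\rm vect}\mbox{-}\mathbb{X}(p,q,r)$ iteratively as an inflation category. The aim is an equivalence, compatible with exact structures, between ${\rm vect}\mbox{-}\mathbb{X}(p,q,r)$ and the category of chains of $p-1$ inflations
\[X_1\rightarrowtail X_2\rightarrowtail\cdots\rightarrowtail X_{p}\]
in ${\rm vect}\mbox{-}\mathbb{X}(q,r)$ (suitably twisted). In the same way, ${\rm vect}\mbox{-}\mathbb{X}(q,r)$ would be identified with chains of $q-1$ inflations in ${\rm vect}\mbox{-}\mathbb{X}(r)$. I would obtain these equivalences from the filtration of a vector bundle $E$ by $E(-i\vec{x})\subset E$ and restriction to the $\vec{x}$-direction. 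The alternative is graded Knörrer-type periodicity $X^p$-reductions.

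Granting this, apply the inflation-category result twice. Take the Frobenius quotient ${\rm vect}\mbox{-}\mathbb{X}(r)\simeq{\rm mod}^{\mathbb{Z}}\mbox{-}\mathbb{K}[t]/(t^r)$ viewed as Frobenius (or the identity if graded modules already live there). Passing to inflation categories gives first $\mathcal{S}^{\mathbb{Z}}_{q-1}(r)$ and then the grid category $\mathcal{S}^{\mathbb{Z}}_{p-1,q-1}(r)$ of Example~\ref{exm:grid}. Composites of Frobenius quotients are Frobenius quotients; I would check this from the definition, which asks for exactness, preservation of projectives, density, and a kernel condition.

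For the essential kernel, recall that a Frobenius quotient has kernel consisting of projectives. The projective-injective objects of ${\rm vect}\mbox{-}\mathbb{X}(p,q,r)$ relative to the chosen exact structure are line bundles. I would compute directly which line bundles map to zero. A line bundle $\mathcal{O}(\vec{l})$ with $\vec{l}=i\vec{x}+j\vec{y}+n\vec{z}$ in normal form corresponds to a chain whose associated grid entry is the graded module $\mathbb{K}[t]/(t^{?})$ depending on $i,j$. It vanishes exactly when $j=0$ or $i=0$, and the boundary cases $i=p-1$ and $j=q-1$ need an additional check. This yields the stated add-closure.

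The main obstacle I expect is the exact identification of ${\rm vect}\mbox{-}\mathbb{X}(p,q,r)$ with the iterated inflation category. The difficulty is matching exact structures, since the Frobenius structure on vect uses the line bundles as projectives, not the split structure. It is also hard to handle the twists by $\vec{x}$ and $\vec{y}$ correctly at the ends of chains. I would try first to prove it for one weight at a time via the functor $E\mapsto (E(-(p-1)\vec{x})\to\cdots\to E)$ restricted to $\mathbb{X}(q,r)$-data, and verify full faithfulness on line bundles plus generation by extensions.
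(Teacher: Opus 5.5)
\textbf{There is a genuine gap.} The central structural step is false, and so is the base case.

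\emph{Why the equivalence and the base quotient cannot exist.} With at most two weights, the homogeneous coordinate algebra is a graded polynomial ring in two variables. So every vector bundle on $\mathbb{X}(q,r)$ or $\mathbb{X}(r)$ is a direct sum of line bundles. In the distinguished exact structure these are all projective-injective, every conflation splits, and the stable category is zero. Consequently every object of ${\rm Inf}_{p-1}({\rm vect}\mbox{-}\mathbb{X}(q,r))$ decomposes as a sum of objects $\theta^s(P)$, so this inflation category also has zero stable category. On the other hand, $\underline{\rm vect}\mbox{-}\mathbb{X}(p,q,r)\neq 0$, since indecomposable bundles of rank at least two are not projective. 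Likewise $\underline{\rm mod}^{\mathbb{Z}}\mbox{-}\mathbb{K}[t]/(t^r)\neq 0$ for $r\geq 2$. Both exact equivalences and Frobenius quotients induce stable equivalences. Hence neither your equivalence ${\rm vect}\mbox{-}\mathbb{X}(p,q,r)\simeq{\rm Inf}_{p-1}({\rm vect}\mbox{-}\mathbb{X}(q,r))$ nor a Frobenius quotient ${\rm vect}\mbox{-}\mathbb{X}(r)\to{\rm mod}^{\mathbb{Z}}\mbox{-}\mathbb{K}[t]/(t^r)$ can exist.

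\emph{What your filtration actually produces.} The maps $x\colon E(-\vec{x})\to E$ are monomorphisms of sheaves, but they are not inflations for the split structure on ${\rm vect}\mbox{-}\mathbb{X}(q,r)$. The chain is also not free: going once around it, the composite is multiplication by $y^q+z^r$ up to sign. So restriction to the cosets of $\mathbb{Z}\vec{y}+\mathbb{Z}\vec{z}$ gives a graded projective-module $p$-factorization of $\omega=y^q+z^r$ over $\mathbb{K}[y,z]$ (the equivalence $\Theta$ of Lemma~\ref{lem:Kno}), not an inflation chain.

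\emph{The missing idea: take cokernels.} By Theorem~\ref{thm:SZ}, the cokernel functor from projective-module $(n+1)$-factorizations of $\omega$ to ${\rm Inf}_n({\rm GProj}_{\rm fpd}\mbox{-}S/(\omega))$ is a Frobenius quotient, not an equivalence. Its kernel consists of the trivial factorizations $\psi^0(P)$, which here are line bundles. The paper uses this twice.
\begin{itemize}
\item First, $\Phi_1\colon{\rm MCM}^L(S(p,q,r))\to{\rm Inf}_{q-1}({\rm MCM}^{H_1}(\mathbb{K}[x,z]/(x^p+z^r)))$, with $H_1=\mathbb{Z}\vec{x}+\mathbb{Z}\vec{z}$.
\item Second, $\Phi\colon{\rm MCM}^{H_1}(\mathbb{K}[x,z]/(x^p+z^r))\to{\rm Inf}_{p-1}({\rm mod}^{\mathbb{Z}}\mbox{-}\mathbb{K}[t]/(t^r))$.
\end{itemize}
The intermediate category consists of graded MCM modules over the curve singularity $\mathbb{K}[x,z]/(x^p+z^r)$; these are not vector bundles on any weighted projective line. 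Theorem~\ref{thm:B} is then applied to the genuine quotient $\Phi$ to obtain ${\rm Inf}_{q-1}(\Phi)$, not to a base equivalence. The composite ${\rm Inf}_{q-1}(\Phi)\circ\Phi_1$ is exactly the double cokernel.

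\emph{The kernel.} It is read off from this factorization. It consists of ${\rm Ker}(\Phi_1)$, namely the bundles $\mathcal{O}(i\vec{x}+n\vec{z})$, together with the bundles $\mathcal{O}(j\vec{y}+n\vec{z})$. The latter are sent by $\Phi_1$ to $\theta^{q-j}$ of a free module lying in ${\rm Ker}(\Phi)$. No separate check is needed at $i=p-1$ or $j=q-1$: in normal form the kernel is exactly the line bundles with $i=0$ or $j=0$.
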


 Here, $\mathcal{O}$ denotes the structure sheaf of $\mathbb{X}(p, q,r)$, and the vectors in the parentheses denote the corresponding elements in the Picard group \cite{GL}.  

 For Theorem~\ref{thm:A}, we propose the notion of \emph{Frobenius quotient} in Definition~\ref{defn:Fq}, which is a certain exact functor between two Frobenius exact categories. The treatment here is inspired by \cite{DI}. The construction of the functor in Theorem~\ref{thm:A} is explicit, since it is essentially given by the so-called \emph{double-cokernel} in Section~\ref{sec:WPL}. 

There are two key steps in the proof of Theorem~\ref{thm:A}. The first step is to relate graded maximal Cohen-Macaulay modules over the homogeneous coordinate algebra of $\mathbb{X}(p, q,r)$ with certain \emph{multifold projective-module factorizations} \cite{BHU, Tri, SZ}; see Section~\ref{sec:PMF}. We mention that such factorizations are natural generalizations of matrix factorizations in \cite{Ei80}; see also \cite{CCKM, MU21, BE}.  Moreover, such factorizations are known as the $p$-cycle construction in \cite{Len}; see also \cite{LO, Ruan}. The second one is to prove the following general result on Frobenius quotients; see Theorem~\ref{thm:inf}. 

\vskip 5pt

\begin{alphatheorem} 
\label{thm:B}  Let $F\colon \mathcal{A} \rightarrow \mathcal{B}$ be a Frobenius quotient between two Frobenius exact categories. Then for each $n\geq 1$, the induced functor ${\rm Inf}_n(F)\colon {\rm Inf}_n(\mathcal{A}) \rightarrow {\rm Inf}_n(\mathcal{B})$ is also a Frobenius quotient.
\end{alphatheorem}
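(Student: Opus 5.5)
The plan is to check, one by one, the clauses of Definition~\ref{defn:Fq} for ${\rm Inf}_n(F)$. I will use the explicit description of the Frobenius exact structure on ${\rm Inf}_n(\mathcal{A})$, whose objects are chains $A_1\to A_2\to\cdots\to A_n$ of inflations. The argument is by induction on $n$, with ${\rm Inf}_1(F)=F$ as the base case. I expect the clauses to be: exactness, preservation of projective-injectives, and the quotient-type condition, namely that the induced triangle functor on stable categories is a Verdier quotient, or equivalently that $F$ is dense up to its essential kernel and ``full modulo'' it. If the definition is phrased differently, the same three-layer structure of the argument still applies.

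\textbf{Formal clauses.} First I will show that ${\rm Inf}_n(F)$ is well defined and exact. Since $F$ is exact, it sends inflations to inflations, so it maps chains of inflations to chains of inflations. Conflations in ${\rm Inf}_n(\mathcal{A})$ are componentwise conflations, and so are preserved. Next, the projective-injective objects of ${\rm Inf}_n(\mathcal{A})$ are the direct sums of the standard chains
\[
0\to\cdots\to0\to P=\cdots=P
\qquad\text{and}\qquad
P=\cdots=P,
\]
with $P$ projective-injective in $\mathcal{A}$. Since $F$ preserves projective-injectives, ${\rm Inf}_n(F)$ maps these to projective-injectives of ${\rm Inf}_n(\mathcal{B})$. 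I will also compute the essential kernel of ${\rm Inf}_n(F)$. It should consist of the chains all of whose components and all of whose successive cokernels $A_{i+1}/A_i$ lie in the kernel of $F$, taken up to projective summands.

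\textbf{Quotient clause: approach.} I will exploit the standard exact functors linking ${\rm Inf}_n$ and ${\rm Inf}_{n-1}$:
\begin{itemize}
\item the truncation $(A_1\to\cdots\to A_n)\mapsto(A_1\to\cdots\to A_{n-1})$;
\item the functor $(A_1\to\cdots\to A_n)\mapsto A_n/A_{n-1}$;
\item their adjoint ``extension by identity'' and ``extension by zero'' functors.
\end{itemize}
These functors commute with $F$ up to natural isomorphism. On stable categories they give a semi-orthogonal decomposition, indeed a recollement-type gluing, of $\underline{{\rm Inf}_n(\mathcal{A})}$ from $\underline{{\rm Inf}_{n-1}(\mathcal{A})}$ and $\underline{\mathcal{A}}$, and likewise for $\mathcal{B}$. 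The functor $\underline{{\rm Inf}_n(F)}$ is compatible with both decompositions.

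\textbf{Quotient clause: main step and obstacle.} For denseness, given a chain in ${\rm Inf}_n(\mathcal{B})$, I will lift $B_1\to\cdots\to B_{n-1}$ by induction. I then lift the last inflation $B_{n-1}\to B_n$ by lifting its class in $\underline{\mathcal{B}}(B_n/B_{n-1},\Sigma B_{n-1})$, adding projective summands to make the morphism an inflation. The main obstacle is this lifting of morphisms and extensions. It requires that $\underline{F}$ induce an equivalence from the Verdier quotient $\underline{\mathcal{A}}/\underline{\ker F}$ onto $\underline{\mathcal{B}}$. Morphisms in a Verdier quotient are roofs, so a lifted map $A_{n-1}\to A_n$ exists only after replacing objects by ones isomorphic modulo $\ker F$. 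I would handle this with a gluing lemma: a functor between recollement-type gluings that is a Verdier quotient on both pieces, and whose gluing bimodules correspond, is itself a Verdier quotient, with kernel glued from the kernels. I would verify this lemma directly via the octahedral axiom. Finally, I will check that the kernel so obtained agrees with the essential kernel computed in the formal step, which completes the induction.
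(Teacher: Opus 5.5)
There is a genuine gap, and it comes from a misreading of Definition~\ref{defn:Fq}. The quotient there is by the ideal $[{\rm Ker}(F)]$ in $\mathcal{A}$ itself, and ${\rm Ker}(F)$ consists of projective-injectives. So $\underline{F}$ is a triangle \emph{equivalence}, not a Verdier quotient with nonzero kernel. By Theorem~\ref{thm:Frob}, what you must show is that ${\rm Inf}_n(F)$ is a pretriangle-equivalence which is moreover full, dense and \emph{objective} as a functor on ${\rm Inf}_n(\mathcal{A})$ itself. Here roofs and ``kernel glued from the kernels'' become vacuous, and your gluing step reduces to the comparison theorem for recollements. It yields only the pretriangle-equivalence part, which is exactly Proposition~\ref{prop:inf} in the paper. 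It cannot yield objectivity. Stable information only tells you that a morphism $(f^s)$ with all $F(f^s)=0$ factors through some $\bigoplus_j\theta^j(P_j)$ with $P_j$ projective, whereas you need $P_j\in{\rm Ker}(F)$. This is a real extra condition. Take reduction modulo $t$, ${\rm proj}\,\mathbb{K}[t]/(t^2)\to{\rm proj}\,\mathbb{K}$, with split exact structures. It is a pretriangle-equivalence, full and dense, with zero essential kernel, yet it kills $t\neq 0$, so it is not a Frobenius quotient.

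The missing idea is an unstable lifting argument that uses the injectivity of objects of ${\rm Ker}(F)$.
\begin{itemize}
\item For fullness, lift $g^1,g^2$ to $f^1,f$. The defect $\iota_Y^1f^1-f\iota_X^1$ is killed by $F$, so it equals $b\circ a$ through some $P\in{\rm Ker}(F)$. Choose $a'$ with $a=a'\circ\iota_X^1$, using that $P$ is injective. Replacing $f$ by $f+b\circ a'$ repairs the square without changing its image under $F$, and you iterate.
\item For objectivity, write $f^1=b^1a^1$ through $P^1\in{\rm Ker}(F)$. Extend $a^1$ by injectivity to a morphism into $\theta^1(P^1)$ and subtract, so that $f^1=0$. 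Then $f^2$ factors through ${\rm Coker}(\iota_X^1)$ via a map killed by $F$, since $F$ of the cokernel map is a deflation. That map therefore factors through some $P^2\in{\rm Ker}(F)$ and can be removed through $\theta^2(P^2)$. Continuing, $(f^s)$ factors through $\bigoplus_j\theta^j(P^j)$.
\end{itemize}
Nothing in your proposal plays this role.

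Two smaller points. In the denseness step, adding projective summands only lifts the chain up to projective summands, which is not denseness. Instead, Lemma~\ref{lem:pte}(2) shows that each conflation $F(X^s)\to U^{s+1}\to F(A)$ is isomorphic to the image of an actual conflation $X^s\to X^{s+1}\to A$ in $\mathcal{A}$. Also, the essential kernel is exactly ${\rm add}\,\{\theta^j(P)\mid 1\le j\le n,\ P\in{\rm Ker}(F)\}$, with no ``up to projective summands''. This holds because inflations between objects of ${\rm Ker}(F)$ split, these objects being injective.
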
 

Here, for any Frobenius category $\mathcal{A}$, we denote by ${\rm Inf}_n(\mathcal{A})$ the category of $n$-inflations in $\mathcal{A}$; it is naturally a Frobenius exact category. If $\mathcal{A}$ is taken to be the category of finite-dimensional  graded modules over $\mathbb{K}[t]/(t^r)$, then ${\rm Inf}_2(\mathcal{A})=\mathcal{S}^\mathbb{Z}(r)$.

Since the two steps above work well in general, they enable us to prove a very general result, which claims a Frobenius quotient involving certain graded Gorenstein-projective modules; see Theorem~\ref{thm:main}. Here, we mention that Gorenstein-projective modules \cite{EJ} are natural generalizations of maximal Cohen-Macaulay modules over Gorenstein rings; see \cite{ABr69}.

The paper is organized as follows. In Section~2, we  introduce Frobenius quotients and prove a recognition theorem; see Theorem~\ref{thm:Frob}. In Section~\ref{sec:inf}, we  define the categories ${\rm Inf}_{m, n}(\mathcal{A})$ of $(m,n)$-inflations in any Frobenius category $\mathcal{A}$. We prove Theorem~\ref{thm:B}. In Section~\ref{sec:PMF}, we relate graded Gorenstein-projective modules with certain multifold projective-module factorizations. We recall the cokernel functor in \cite{SZ}.

In Section~\ref{sec:main}, we apply results in Sections~\ref{sec:inf} and \ref{sec:PMF} to prove the main result,  which claims a Frobenius quotient involving graded Gorenstein-projective modules over two different graded rings; see Theorem~\ref{thm:main}. In the final section, we apply these results to weighted projective lines and prove  Theorem~\ref{thm:A}.

\section{Pretriangle-equivalences and Frobenius quotients}

In this section, we recall basic facts on exact categories and Frobenius categories. Inspired by \cite{Chen12, DI}, we introduce the notion of Frobenius quotients.

Let $\mathcal{A}$ be an additive category. For any additive subcategory $\mathcal{X}$, we denote by $[\mathcal{X}]$ the ideal formed by all the morphisms factoring through $\mathcal{X}$, and by $\mathcal{A}/{[\mathcal{X}]}$ the factor category.

Let $F\colon \mathcal{A}\rightarrow \mathcal{B}$ be an additive functor between two additive categories. The \emph{essential kernel} ${\rm Ker}(F)$ of $F$ is defined to be the full subcategory of $\mathcal{A}$ formed by those objects that are annihilated by $F$.  Following \cite[Appendix]{RZ}, the functor $F$ is called \emph{objective} if each morphism $f$ in $\mathcal{A}$ satisfying $F(f)=0$ necessarily factors through ${\rm Ker}(F)$.

The following fact is well known; consult \cite[Appendix]{RZ}.

\begin{lem}\label{lem:objective}
    Let $F\colon \mathcal{A}\rightarrow \mathcal{B}$ be an objective functor. Assume further that $F$ is full and dense. Then it induces an equivalence $\mathcal{A}/{[{\rm Ker}(F)]}\simeq \mathcal{B}$. \hfill $\square$
\end{lem}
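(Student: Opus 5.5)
Since $F$ annihilates every object of ${\rm Ker}(F)$, it annihilates every morphism factoring through ${\rm Ker}(F)$. Hence $F$ vanishes on the ideal $[{\rm Ker}(F)]$, and the universal property of the factor category gives a unique additive functor
$$\bar F\colon \mathcal{A}/[{\rm Ker}(F)]\longrightarrow \mathcal{B}$$
with $\bar F\circ \pi=F$, where $\pi\colon \mathcal{A}\to \mathcal{A}/[{\rm Ker}(F)]$ is the projection. Here one should first observe that $[{\rm Ker}(F)]$ really is an ideal. It is closed under composition on either side, and it is closed under sums because ${\rm Ker}(F)$ is closed under finite direct sums. Indeed, $F$ is additive, so $F(X\oplus Y)\cong F(X)\oplus F(Y)=0$. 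Thus if $f$ factors through $X$ and $g$ through $Y$, then $f+g$ factors through $X\oplus Y$.

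To show $\bar F$ is an equivalence, I check that it is full, faithful and dense.

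\emph{Density.} The functor $\pi$ is the identity on objects, so $\bar F$ is dense because $F$ is.

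\emph{Fullness.} $\pi$ is surjective on Hom-sets, and $F$ is full, so $\bar F$ is full.

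\emph{Faithfulness.} This is the only step that uses objectivity. Let $\pi(f)$ be a morphism with $\bar F(\pi(f))=F(f)=0$. By objectivity, $f$ factors through an object of ${\rm Ker}(F)$. So $f\in[{\rm Ker}(F)]$, and hence $\pi(f)=0$. Since $\bar F$ is additive, its kernel on each Hom-group is therefore zero, which is exactly faithfulness.

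A full, faithful and dense functor is an equivalence, so $\mathcal{A}/[{\rm Ker}(F)]\simeq\mathcal{B}$. No step here is a real obstacle. The only points needing care are that $[{\rm Ker}(F)]$ is an ideal, which is the additivity remark above, and that faithfulness reduces to triviality of the kernel because the functors are additive.
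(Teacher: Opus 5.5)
Your proof is correct, and it includes the needed checks that $[{\rm Ker}(F)]$ is an ideal and that $\bar F$ is additive. The paper gives no proof of its own and defers to the appendix of Ringel--Zhang, where the lemma is established by this same standard argument: factor $F$ through $\pi$, inherit fullness and density from $F$, and obtain faithfulness from objectivity.
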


Let $\mathcal{A}$ be an additive category. For any two morphisms $i\colon X\rightarrow Y$ and $p\colon Y\rightarrow Z$, we form  a composable pair $(i, p)$. Such a pair is called a \emph{kernel-cokernel pair} if $i$ is a kernel of $p$ and $p$ is a cokernel of $i$. An \emph{exact structure} $\mathcal{E}$ on $\mathcal{A}$ is a chosen class of kernel-cokernel pairs, which satisfies certain natural axioms; see \cite[Appendix~A]{Kel90} and \cite{Bu10}.

An \emph{exact category} $(\mathcal{A}, \mathcal{E})$ \cite{Qui73} is an additive category $\mathcal{A}$ with a specified exact structure $\mathcal{E}$. The pairs $(i, p)$ in $\mathcal{E}$ are called conflations, where $i$ is called an inflation and $p$ is called a deflation.  When the exact structure $\mathcal{E}$ is understood, we might suppress it and simply say that $\mathcal{A}$ is an exact category.

\begin{lem}\label{lem:strictly-i}
Let $\mathcal{A}$ be an exact category. Consider the following commutative square consisting of four inflations.
\begin{align}\label{equ:si}
   \xymatrix{
   X\ar[d]_-{j}\ar[r]^i & Y\ar[d]^-{j'} \\
   X'\ar[r]^{i'} & Y'
   }
\end{align}
Denote by $Y\times_{X}X'$ the pushout of $i$ and $j$. Then the following statements are equivalent.
\begin{enumerate}
    \item The induced morphism $\bar{j'}\colon {\rm Coker}(i)\rightarrow {\rm Coker}(i')$ by $j'$ is an inflation.
    \item The induced morphism $Y\times_{X}X'\rightarrow Y'$ is an inflation.
    \item The induced morphism $\bar{i'}\colon {\rm Coker}(j)\rightarrow {\rm Coker}(j')$ by $i'$ is an inflation.
\end{enumerate}
\end{lem}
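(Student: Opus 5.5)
By the symmetry of the square (\ref{equ:si}) under transposition, which swaps $i\leftrightarrow j$ and $i'\leftrightarrow j'$ and fixes the pushout $Y\times_X X'$, it suffices to prove (1)$\Leftrightarrow$(2). The same argument applied to the transposed square then gives (3)$\Leftrightarrow$(2).

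I will use the standard pushout facts in an exact category (see \cite{Bu10}). Since $i$ is an inflation, the pushout $P=Y\times_X X'$ of $i$ and $j$ exists. The induced map $i''\colon X'\to P$ is an inflation with ${\rm Coker}(i'')\cong {\rm Coker}(i)$. Likewise $j''\colon Y\to P$ is an inflation, because $j$ is, and ${\rm Coker}(j'')\cong {\rm Coker}(j)$. Let $u\colon P\to Y'$ be the induced morphism, so that $ui''=i'$ and $uj''=j'$.

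\textbf{Step 1.} Compare cokernels. There is a morphism of conflations $(1_{X'},u,\bar u)$ from $X'\xrightarrow{i''}P\to {\rm Coker}(i)$ to $X'\xrightarrow{i'}Y'\to{\rm Coker}(i')$. Here $\bar u\colon {\rm Coker}(i)\to{\rm Coker}(i')$ is the map induced by $u$. Precomposing with $Y\to P$ shows that $\bar u$ equals $\bar{j'}$, since $P\to{\rm Coker}(i'')\cong{\rm Coker}(i)$ restricted along $j''$ is the cokernel map of $i$.

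\textbf{Step 2.} A morphism of conflations that is the identity on the left terms induces a bicartesian (pushout and pullback) right-hand square $P\to Y',\ {\rm Coker}(i)\to{\rm Coker}(i')$. This is the standard fact that the right square is a pullback; equivalently, it is exact in the sense that $P\to Y'\oplus {\rm Coker}(i)\to{\rm Coker}(i')$ is a conflation.

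\textbf{Step 3.} In a bicartesian square whose horizontal maps are deflations (here $P\to{\rm Coker}(i)$ and $Y'\to{\rm Coker}(i')$), the two vertical maps have isomorphic cokernels. Moreover, one vertical map is an inflation if and only if the other is. The implication that $\bar u$ an inflation forces $u$ an inflation follows because pulling back the conflation ${\rm Coker}(i)\to{\rm Coker}(i')\to C$ along the deflation $Y'\to{\rm Coker}(i')$ yields a conflation $P\to Y'\to C$. For the converse, if $u$ is an inflation with cokernel $C$, then $C$ is also a cokernel of $\bar u$, since the square is a pushout. To see that $\bar u$ is an inflation, I will use the \(3\times 3\)-lemma or Noether-type isomorphism for exact categories (\cite[Lemma 3.5]{Bu10}): $i'=u i''$ is a composite of inflations, and ${\rm Coker}(i'')\to{\rm Coker}(i')\to{\rm Coker}(u)$ is a conflation. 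Hence $\bar u=\bar{j'}$ is an inflation.

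The main obstacle is Step 3's converse direction. It requires exactly the statement that for inflations $a\colon A\to B$ and $b\colon B\to C$, the induced sequence ${\rm Coker}(a)\to{\rm Coker}(ba)\to{\rm Coker}(b)$ is a conflation. I would prove this via the $3\times3$ lemma in \cite{Bu10} or cite it directly. The remaining identifications are routine diagram chases using universal properties of cokernels and pushouts.
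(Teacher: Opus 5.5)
Your proof is correct and follows essentially the paper's route. The paper gets (1)$\Leftrightarrow$(2) directly from Bühler's standard facts, namely the pushout/bicartesian-square results and the Noether isomorphism \cite[Corollary~3.6]{Bu10}, and then (2)$\Leftrightarrow$(3) by transposing the square; you carry out in detail the diagram chase that the paper leaves to those citations.
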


The diagram satisfying these equivalent conditions is said to be \emph{strictly inflated}.

\begin{proof}
The equivalence between (1) and (2) follows immediately from \cite[Proposition~2.10 and Corollary~3.6]{Bu10}.  By symmetry, we have the equivalence between (2) and (3).
\end{proof}

\begin{rem}\label{rem:sinf}
    In the strictly inflated diagram (\ref{equ:si}), the object $X$ is identified with the pullback of $j'$ and $i'$. This follows from \cite[Proposition~2.12]{Bu10} and Lemma~\ref{lem:strictly-i}(2). 
\end{rem}

Let $F\colon \mathcal{A}\rightarrow \mathcal{B}$ be an additive functor between two exact categories. It is called \emph{exact}, provided that it sends conflations in $\mathcal{A}$ to conflations in $\mathcal{B}$. By an \emph{exact equivalence}, we mean an exact functor $F$, which is an equivalence of categories and whose quasi-inverse $F^{-1}\colon \mathcal{B}\rightarrow \mathcal{A}$ is also exact.

Recall that an exact category $\mathcal{A}$ is called \emph{Frobenius} if it has enough projectives and enough injectives, and the class of projectives coincides with that of injectives. Denote by $\mathcal{P}$ the full subcategory formed by all projective-injectives in $\mathcal{A}$. The corresponding factor category $\mathcal{A}/{[\mathcal{P}]}$ is usually denoted by $\underline{\mathcal{A}}$,  and called   the \emph{stable category} of $\mathcal{A}$. By \cite[I.2.6~Theorem]{Hap}, it has a canonical triangulated structure.

For convenience, we introduce the following terminology; compare \cite[I.2.8]{Hap}.

\begin{defn}
 Let $\mathcal{A}$ and $\mathcal{B}$ be two Frobenius categories. By a \emph{pretriangle-equivalence}, we mean an exact functor $F\colon \mathcal{A}\rightarrow \mathcal{B}$, which sends projectives to projectives and the induced triangle functor $F\colon \underline{\mathcal{A}}\rightarrow \underline{\mathcal{B}}$ is an equivalence.
\end{defn}

In other words, a pretriangle-equivalence induces a triangle equivalence between the stable categories.

The following facts are standard.

\begin{lem}\label{lem:pte}
Let $F\colon \mathcal{A}\rightarrow \mathcal{B}$ be a pretriangle-equivalence between two Frobenius categories. Assume that $A$ and $A'$ are objects in $\mathcal{A}$. Then the following statements hold.
\begin{enumerate}
\item If $F(A)$ is projective in $\mathcal{B}$, then $A$ is also projective in $\mathcal{A}$.
\item The functor $F$ induces an isomorphism ${\rm Ext}^1_\mathcal{A}(A, A')\simeq {\rm Ext}^1_\mathcal{B}(F(A), F(A'))$.
\end{enumerate}
\end{lem}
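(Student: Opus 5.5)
For (1), the plan is to use that an object of a Frobenius category is projective exactly when it vanishes in the stable category. If $F(A)$ is projective in $\mathcal{B}$, then $F(A)\simeq 0$ in $\underline{\mathcal{B}}$. The induced functor $\underline{\mathcal{A}}\rightarrow\underline{\mathcal{B}}$ is an equivalence, so it reflects zero objects, and hence $A\simeq 0$ in $\underline{\mathcal{A}}$. Concretely, ${\rm Id}_A$ then factors through some projective $P$ in $\mathcal{A}$. This makes $A$ a direct summand of $P$; since $\mathcal{A}$ is idempotent complete in the relevant sense, or simply because a retract of a projective object in an exact category is projective via the lifting property, $A$ is projective.

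For (2), I would use the standard identification ${\rm Ext}^1_\mathcal{A}(A,A')\simeq \underline{\rm Hom}_\mathcal{A}(A,\Sigma A')$, where $\Sigma A'$ is the cokernel of an inflation $A'\rightarrow I(A')$ into a projective-injective. I proceed in three steps.

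\emph{Step 1: $F$ preserves the conflation defining $\Sigma A'$.} Choose a conflation $A'\rightarrow I\rightarrow \Sigma A'$ with $I$ projective. Since $F$ is exact and sends projectives to projectives, $F(A')\rightarrow F(I)\rightarrow F(\Sigma A')$ is a conflation in $\mathcal{B}$ with $F(I)$ projective-injective. Thus $F(\Sigma A')\simeq \Sigma F(A')$ in $\underline{\mathcal{B}}$, which is just the statement that the induced functor is a triangle functor.

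\emph{Step 2: use the equivalence.} Full faithfulness of $\underline{\mathcal{A}}\rightarrow\underline{\mathcal{B}}$ gives
$$\underline{\rm Hom}_\mathcal{A}(A,\Sigma A')\simeq \underline{\rm Hom}_\mathcal{B}(F(A),F(\Sigma A'))\simeq \underline{\rm Hom}_\mathcal{B}(F(A),\Sigma F(A')).$$

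\emph{Step 3: check compatibility with the natural map.} The main point needing care is that this composite coincides with the natural map ${\rm Ext}^1_\mathcal{A}(A,A')\rightarrow {\rm Ext}^1_\mathcal{B}(F(A),F(A'))$, which sends a conflation to its image under $F$. This follows by naturality. A class in ${\rm Ext}^1$ corresponds to a morphism $A\rightarrow \Sigma A'$, obtained by pulling back the chosen conflation $A'\rightarrow I\rightarrow\Sigma A'$. Exact functors preserve pullbacks of conflations along morphisms, because the pulled-back conflation is characterized by a commutative diagram of conflations. Hence $F$ of the pullback is the pullback of $F$ of the chosen conflation, and the two maps agree.

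Therefore the natural map on ${\rm Ext}^1$ is a bijection, and it is additive because the Baer sum is likewise preserved by exact functors. No step looks like a real obstacle; the only mild subtlety is the compatibility check in Step 3, together with the standard isomorphism ${\rm Ext}^1\simeq\underline{\rm Hom}(-,\Sigma -)$ in Frobenius categories.
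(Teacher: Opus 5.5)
Your proposal is correct and follows the same route as the paper: (1) via the characterization of projectives as the objects that vanish in the stable category, and (2) via ${\rm Ext}^1\simeq\underline{\rm Hom}(-,\Sigma -)$ combined with the stable equivalence. Your Step~3, which checks that the resulting bijection is the map given by applying $F$ to conflations, makes explicit a point the paper leaves tacit but relies on later in Proposition~\ref{prop:pte} and Theorem~\ref{thm:inf}.
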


\begin{proof}
    (1) follows from the general fact that an object in any Frobenius category is projective if and only if it is isomorphic to the zero object in the stable category.  For (2), we recall the isomorphisms
    $${\rm Ext}^1_\mathcal{A}(A, A')\simeq \underline{\rm Hom}_\mathcal{A}(A, \Sigma(A')) \mbox{ and } {\rm Ext}^1_\mathcal{B}(F(A), F(A'))  \simeq \underline{\rm Hom}_\mathcal{B}(F(A), \Sigma F(A')).$$
    Here, $\Sigma$ denotes the suspension functor. Then the required isomorphism follows from the induced triangle equivalence.
\end{proof}

\begin{prop}\label{prop:pte}
    Let $F\colon \mathcal{A}\rightarrow \mathcal{B}$ be an additive functor between two Frobenius categories, which is an equivalence. Then $F$ is an exact equivalence if and only if it is a pretriangle-equivalence.
\end{prop}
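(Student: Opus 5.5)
We prove the two implications separately. Throughout we use the characterization of Frobenius categories: projectives equal injectives, and an object is projective if and only if it is zero in the stable category.

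\emph{``Only if''.} Suppose $F$ is an exact equivalence with exact quasi-inverse $F^{-1}$.
\begin{enumerate}
\item An equivalence preserves and reflects projectivity. Indeed, $P$ is projective in the exact category $\mathcal{A}$ exactly when every deflation onto $P$ splits. Since $F$ and $F^{-1}$ both preserve conflations, they both preserve deflations and split epimorphisms, so $F(P)$ is projective whenever $P$ is.
\item Hence $F$ restricts to an equivalence $\mathcal{P}_\mathcal{A}\simeq \mathcal{P}_\mathcal{B}$.
\item It follows that $F$ maps the ideal $[\mathcal{P}_\mathcal{A}]$ onto $[\mathcal{P}_\mathcal{B}]$: a morphism factors through a projective if and only if its image does, using $F^{-1}$ for the converse.
\item So the induced functor $\underline{\mathcal{A}}\to\underline{\mathcal{B}}$ is full, dense and faithful, hence an equivalence.
\end{enumerate}
It is a triangle functor by \cite[I.2.8]{Hap}, since $F$ is exact and preserves projectives. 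Thus $F$ is a pretriangle-equivalence.

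\emph{``If''.} Suppose $F$ is an equivalence and a pretriangle-equivalence. We must show that the quasi-inverse $F^{-1}$ is exact. The key step is that $F$ \emph{reflects} conflations. Take a conflation $\eta\colon 0\to F(A')\to E\to F(A)\to 0$ in $\mathcal{B}$.

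\begin{enumerate}
\item By Lemma~\ref{lem:pte}(2), $F$ induces a bijection ${\rm Ext}^1_\mathcal{A}(A,A')\simeq {\rm Ext}^1_\mathcal{B}(F(A),F(A'))$. Here ${\rm Ext}^1$ is computed via classes of conflations, and the identification with $\underline{\rm Hom}(-,\Sigma -)$ is compatible with applying the exact functor $F$.
\item So there is a conflation $\xi\colon 0\to A'\to X\to A\to 0$ in $\mathcal{A}$ with $F(\xi)$ equivalent to $\eta$. In particular there is an isomorphism $F(X)\simeq E$ compatible with the end terms.
\item Since $F$ is fully faithful, we may transport the morphisms. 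Applying $F^{-1}$ to $\eta$ gives a sequence isomorphic, as a diagram, to $\xi$.
\item The class of conflations is closed under isomorphism of diagrams, so $F^{-1}(\eta)$ is a conflation.
\end{enumerate}
This handles conflations whose end terms are of the form $F(A')$ and $F(A)$. Since every object of $\mathcal{B}$ is isomorphic to some $F(-)$, the general case reduces to this one. Hence $F^{-1}$ is exact.

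The main obstacle is step 1 of the ``if'' direction. We need the Ext isomorphism of Lemma~\ref{lem:pte}(2) to really be induced by applying $F$ to conflations, not merely an abstract bijection between groups. This requires that the standard isomorphism ${\rm Ext}^1\simeq\underline{\rm Hom}(-,\Sigma-)$ be natural with respect to exact functors preserving projectives.

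To settle this, fix a conflation $A'\to I\to \Sigma A'$ with $I$ projective. Its image under $F$ is again such a conflation in $\mathcal{B}$, since $F$ is exact and sends projectives to projectives. A class $\xi$ corresponds to the morphism $A\to\Sigma A'$ obtained by comparing $\xi$ with this conflation. Applying $F$ to the comparison diagram shows that the class $F(\xi)$ corresponds to $F$ applied to that morphism. So the square relating $\xi\mapsto F(\xi)$ with the triangle equivalence commutes, and the Ext isomorphism is indeed induced by $F$ on conflations.
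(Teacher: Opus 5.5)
Your proposal is correct and follows essentially the same route as the paper. In the ``if'' direction you use Lemma~\ref{lem:pte}(2) to realize a conflation in $\mathcal{B}$ as the image of a conflation in $\mathcal{A}$, then transport it along the equivalence and use that conflations are closed under isomorphism. Your check that this ${\rm Ext}^1$-isomorphism is induced by applying $F$ to conflations, and your detailed ``only if'' argument, supply points the paper treats as evident.
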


\begin{proof}
    It  suffices to prove the ``if" part.  For this, we have to prove that its quasi-inverse $F^{-1}$ is also exact. This is equivalent to the following statement: for any composable pair $(i, p)$ of morphisms in $\mathcal{A}$ with $(F(i), F(p))$ a conflation in $\mathcal{B}$, the given pair $(i, p)$ is necessarily a conflation.

    We prove  the statement above. Assume that $i\colon X\rightarrow Y$ and $p\colon Y\rightarrow Z$ are the given morphisms. By Lemma~\ref{lem:pte}(2), the conflation $(F(i), F(p))$ is isomorphic to the image of a conflation in $\mathcal{A}$. More precisely, there is a conflation $(i', p')$ in $\mathcal{A}$ with $i'$ starting at $X$ and $p'$ ending at $Z$, such that $(F(i'), F(p'))$ is isomorphic to $(F(i), F(p))$. By the equivalence $F$, this isomorphism implies that the composable pairs $(i', p')$ and $(i, p)$ are isomorphic. It follows that $(i, p)$ is also a conflation.
\end{proof}

The following consideration is inspired by \cite[Subsection~3A]{DI}; compare \cite{Chen12}. Let $\mathcal{A}$ be a Frobenius category. A full additive subcategory $\mathcal{F}\subseteq \mathcal{A}$ is called \emph{removable} if $\mathcal{F}\subseteq \mathcal{P}$ and the canonical functor $\pi_\mathcal{F}\colon \mathcal{A}\rightarrow \mathcal{A}/{[\mathcal{F}]}$ sends any conflation in $\mathcal{A}$ to a kernel-cokernel pair in $\mathcal{A}/{[\mathcal{F}]}$. Recall that $\mathcal{E}$ is the exact structure on $\mathcal{A}$. Denote by $\widetilde{\pi_\mathcal{F}(\mathcal{E})}$ the isomorphism closure of $\pi_\mathcal{F}(\mathcal{E})$.

\begin{lem}\label{lem:removable}
    Assume that $\mathcal{F}$ is a removable subcategory  of $\mathcal{A}$. Then $(\mathcal{A}/{[\mathcal{F}]}, \widetilde{\pi_\mathcal{F}(\mathcal{E})})$ becomes a Frobenius exact category, whose subcategory of projectives is precisely $\mathcal{P}/[\mathcal{F}]$.  Moreover, the canonical functor $\pi_\mathcal{F} \colon \mathcal{A}\rightarrow \mathcal{A}/{[\mathcal{F}]}$ is a pretriangle-equivalence between these two Frobenius categories.
\end{lem}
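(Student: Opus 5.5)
We must check three things in turn: that $\widetilde{\pi_\mathcal{F}(\mathcal{E})}$ is an exact structure on $\mathcal{A}/[\mathcal{F}]$, that the resulting category is Frobenius with projectives $\mathcal{P}/[\mathcal{F}]$, and that $\pi_\mathcal{F}$ induces a triangle equivalence on stable categories. Throughout, write $\bar{\mathcal{A}}=\mathcal{A}/[\mathcal{F}]$ and $\pi=\pi_\mathcal{F}$.

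\textbf{Exact structure.} Removability says that $\pi$ sends conflations to kernel-cokernel pairs, so the class $\widetilde{\pi(\mathcal{E})}$ consists of kernel-cokernel pairs and is closed under isomorphisms. Identity morphisms are deflations because $\pi$ preserves split conflations.

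The main tool is a \emph{lifting principle}. Since $\pi$ is full and bijective on objects, every morphism in $\bar{\mathcal{A}}$ lifts to $\mathcal{A}$. Moreover, if $f\colon X\to Y$ in $\mathcal{A}$ becomes an isomorphism in $\bar{\mathcal{A}}$, then $(f,0)\colon X\to Y\oplus F_0$ is an inflation for a suitable $F_0\in\mathcal{F}$. To see this, pick $g$ with $gf-1_X$ factoring through some $F_0\in \mathcal{F}\subseteq\mathcal{P}$. Then one can modify $f$ to a split monomorphism $(f,u)\colon X\to Y\oplus F_0$, which is isomorphic to $\pi(f)$ in $\bar{\mathcal{A}}$.

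Using this, every deflation of $\bar{\mathcal{A}}$ can be written, up to isomorphism, as $\pi(p)$ for a deflation $p$ of $\mathcal{A}$. Given a morphism $\pi(g)\colon W\to Z$ and such a deflation, the pullback in $\mathcal{A}$ of $p$ along $g$ maps under $\pi$ to a square. This square is a pullback in $\bar{\mathcal{A}}$, because its rows form a conflation that $\pi$ sends to a kernel-cokernel pair with the same kernel.

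Composition of deflations is handled similarly. We write $\pi(p')\circ\pi(p)$, where the two maps need not compose well in $\mathcal{A}$ up to $[\mathcal{F}]$. We first adjust by adding a projective summand from $\mathcal{F}$, so that the composite is a genuine composite of deflations in $\mathcal{A}$; adding $F_0\to 0$ is still a deflation modulo $\mathcal{F}$. The dual axioms follow by symmetry.

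\textbf{Frobenius property.} For $P\in\mathcal{P}$, $\pi(P)$ is projective. Indeed, any deflation in $\bar{\mathcal{A}}$ lifts to a deflation in $\mathcal{A}$, and any morphism from $P$ lifts along it modulo $[\mathcal{F}]$. Such an error factors through some $F\in\mathcal{F}\subseteq\mathcal{P}$, so it can itself be lifted, using that $F$ is projective. The dual argument shows $\pi(P)$ is injective. The images of the conflations $\Omega X\to P\to X$ and $X\to I\to \Sigma X$ give enough projectives and injectives. Every projective of $\bar{\mathcal{A}}$ is then a summand of some $\pi(P)$, which gives the description $\mathcal{P}/[\mathcal{F}]$, and the same holds for injectives.

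\textbf{Pretriangle-equivalence.} The functor $\pi$ is exact by construction and sends projectives to projectives. On stable categories we have
$$\bar{\mathcal{A}}/[\mathcal{P}/[\mathcal{F}]]=\mathcal{A}/[\mathcal{P}]=\underline{\mathcal{A}},$$
because $[\mathcal{F}]\subseteq[\mathcal{P}]$. So the induced functor is the identity and in particular an equivalence.

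The main obstacle is the verification of the exact-category axioms, especially closure of deflations under composition and the existence of pullbacks. This is where the removability hypothesis and the lifting principle must be combined carefully. If that becomes unwieldy, the fallback is to check the weaker axiom set from \cite[Appendix~A]{Kel90} instead.
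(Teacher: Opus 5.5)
Your route differs from the paper's. The paper gives no argument of its own and simply invokes \cite[Theorem~3.6]{DI}, with \cite[Remark~3.3(c)]{DI} matching the hypothesis to removability; you instead verify everything by hand. Several parts of your outline are fine: the Frobenius part, the padding idea for composing deflations, and the identification $\bar{\mathcal A}/[\mathcal P/[\mathcal F]]=\mathcal A/[\mathcal P]$. The pullback step, however, has a real gap.

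Fix notation: $p\colon Y\to Z$ is a deflation of $\mathcal A$ with kernel $i$, $g\colon W\to Z$, and $g'\colon E\to Y$, $p'\colon E\to W$, $i'$ are the pullback data in $\mathcal A$. That $\pi$ sends both rows to kernel--cokernel pairs with the same kernel only yields \emph{uniqueness} of induced morphisms. Indeed, if $\pi(g')\bar w=0=\pi(p')\bar w$, then $\bar w$ factors through the kernel $\pi(i')$ of $\pi(p')$, and $\pi(i)=\pi(g')\pi(i')$ is monic. It does not yield \emph{existence}. A cone $u\colon T\to Y$, $v\colon T\to W$ in $\bar{\mathcal A}$ only satisfies $pu-gv=\beta\alpha$ with $\alpha\colon T\to F$, $\beta\colon F\to Z$, $F\in\mathcal F$. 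So the pullback property in $\mathcal A$ does not apply, and cokernel properties give no lifts through $\pi(p')$. The principle ``a morphism of kernel--cokernel pairs that is invertible on kernels has a cartesian right square'' is a fact about exact categories (cf.\ \cite[Proposition~2.12]{Bu10}), so using it in $\bar{\mathcal A}$ is circular.

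The missing step is exactly where $\mathcal F\subseteq\mathcal P$ is needed. Since $F$ is projective, $\beta=p\gamma$ for some $\gamma\colon F\to Y$. Then $u'=u-\gamma\alpha$ equals $u$ in $\bar{\mathcal A}$ and satisfies $pu'=gv$ in $\mathcal A$, so the pullback in $\mathcal A$ supplies the required morphism. Dually, injectivity of the objects of $\mathcal F$ is what makes the pushout axiom work. Your fallback to \cite[Appendix~A]{Kel90} would not avoid this, since those axioms still include pullbacks of deflations and pushouts of inflations.

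Two smaller corrections.

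\begin{enumerate}
\item Your lifting principle as first stated is false. For $0\neq F\in\mathcal F$, the map $f\colon F\to 0$ is invertible in $\bar{\mathcal A}$, but $(f,0)^{T}\colon F\to F_0$ is zero, hence not an inflation. The correct version is the one you give next: if $hf-1_X=ba$ with $a\colon X\to F_0$, then $(h,-b)(f,a)^{T}=1_X$, so $(f,a)^{T}$ is split monic. For composing deflations you need the dual form. If $ts-1_{Y_2}=b'a'$ through $F'\in\mathcal F$, then $(t,b')\colon Y_1\oplus F'\to Y_2$ is split epic, and $p_2\circ(t,b')\circ(p_1\oplus 1_{F'})$ is a deflation of $\mathcal A$ isomorphic in $\bar{\mathcal A}$ to $p_2tp_1$.
\item To conclude that a projective $X$ of $\bar{\mathcal A}$ lies in $\mathcal P$, take a splitting $rs=1_X$ in $\bar{\mathcal A}$ with $s\colon X\to P$. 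This means $1_X$ factors in $\mathcal A$ through $P\oplus F$ for some $F\in\mathcal F$, so $X$ is projective in $\mathcal A$. No idempotent completeness is needed.
\end{enumerate}

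With these repairs, your direct argument becomes a self-contained proof of what the paper outsources to \cite{DI}.
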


\begin{proof}
    This is a special case of \cite[Theorem~3.6]{DI}; compare \cite[Remark~3.3(c)]{DI}.
\end{proof}

The following terminology will be convenient.

\begin{defn}\label{defn:Fq}
    An additive functor $F\colon \mathcal{A}\rightarrow \mathcal{B}$ between two Frobenius categories is called a  \emph{Frobenius quotient},  provided that $F$ admits a factorization
    $$\mathcal{A} \stackrel{\pi_\mathcal{F}} \longrightarrow  \mathcal{A}/[\mathcal{F}] \stackrel{\bar{F}}\longrightarrow \mathcal{B} $$
    with $\mathcal{F}$ a removable subcategory of $\mathcal{A}$ and $\bar{F}$ an exact equivalence.
\end{defn}

In the situation above, $\mathcal{F}$ necessarily equals ${\rm  Ker}(F)$ and the factorization is indeed unique. By the very definition,  any Frobenius quotient is  essentially given by the canonical functor $\pi_\mathcal{F}\colon \mathcal{A}\rightarrow \mathcal{A}/{[\mathcal{F}]}$ for some removable subcategory $\mathcal{F}$. In particular, any Frobenius quotient is an exact functor.

The following result might be viewed as a recognition theorem for Frobenius quotients.

\begin{thm}\label{thm:Frob}
    Let $F\colon \mathcal{A}\rightarrow \mathcal{B}$ be an additive functor between two Frobenius categories. Then $F$ is a Frobenius quotient if and only if it is a pretriangle-equivalence,  objective, full and dense.
\end{thm}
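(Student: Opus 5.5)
For the ``only if'' part, I will assume $F=\bar F\circ \pi_\mathcal{F}$ with $\mathcal{F}$ removable and $\bar F$ an exact equivalence. By Lemma~\ref{lem:removable}, $\pi_\mathcal{F}$ is a pretriangle-equivalence. The functor $\bar F$ is an exact equivalence, so it preserves projectives and induces an equivalence on the stable categories. A composite of pretriangle-equivalences is again one, so $F$ is a pretriangle-equivalence. Fullness and density follow because $\pi_\mathcal{F}$ is full and the identity on objects, while $\bar F$ is an equivalence. For objectivity, take $f$ with $F(f)=0$. Then $\pi_\mathcal{F}(f)=0$, so $f$ factors through $\mathcal{F}$. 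Every object of $\mathcal{F}$ is sent to zero in $\mathcal{A}/[\mathcal{F}]$, hence $\mathcal{F}\subseteq {\rm Ker}(F)$, and therefore $f$ factors through ${\rm Ker}(F)$.

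For the ``if'' part, set $\mathcal{F}={\rm Ker}(F)$. Since $F$ is objective, full and dense, Lemma~\ref{lem:objective} gives an equivalence $\bar F\colon \mathcal{A}/[\mathcal{F}]\to\mathcal{B}$ with $F=\bar F\pi_\mathcal{F}$. It remains to show that $\mathcal{F}$ is removable and that $\bar F$ is exact with respect to $\widetilde{\pi_\mathcal{F}(\mathcal{E})}$.

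\emph{$\mathcal{F}\subseteq\mathcal{P}$.} If $F(A)=0$, then $F(A)$ is projective, so $A$ is projective by Lemma~\ref{lem:pte}(1).

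\emph{Conflations go to kernel-cokernel pairs.} Take a conflation $(i,p)$ in $\mathcal{A}$. Then $(F(i),F(p))$ is a conflation in $\mathcal{B}$, and in particular a kernel-cokernel pair there. Since $\bar F$ is an equivalence of additive categories, it reflects kernels and cokernels. As $\bar F(\pi_\mathcal{F}(i),\pi_\mathcal{F}(p))=(F(i),F(p))$, the pair $(\pi_\mathcal{F}(i),\pi_\mathcal{F}(p))$ is a kernel-cokernel pair in $\mathcal{A}/[\mathcal{F}]$. Hence $\mathcal{F}$ is removable.

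\emph{Exactness of $\bar F$.} By Lemma~\ref{lem:removable}, $\mathcal{A}/[\mathcal{F}]$ is Frobenius and $\pi_\mathcal{F}$ is a pretriangle-equivalence. The conflations of $\mathcal{A}/[\mathcal{F}]$ are isomorphic to images of conflations of $\mathcal{A}$, and $F$ is exact, so $\bar F$ is exact. The projectives of $\mathcal{A}/[\mathcal{F}]$ are images of objects of $\mathcal{P}$, and $F$ preserves projectives, so $\bar F$ preserves projectives. On stable categories, $\underline{\bar F}\circ\underline{\pi_\mathcal{F}}=\underline F$ with both $\underline F$ and $\underline{\pi_\mathcal{F}}$ equivalences, so $\underline{\bar F}$ is an equivalence. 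Thus $\bar F$ is a pretriangle-equivalence which is also an equivalence, and Proposition~\ref{prop:pte} shows it is an exact equivalence.

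The main point needing care is the exactness of $\bar F$, specifically the fact that its quasi-inverse is exact. This is handled entirely by Proposition~\ref{prop:pte}, so the real content is verifying its hypotheses, in particular that $\bar F$ induces an equivalence on stable categories. That verification uses only the two-out-of-three property for equivalences.
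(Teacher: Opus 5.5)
Your proposal is correct and matches the paper's proof step for step: you factor $F$ through $\pi_{{\rm Ker}(F)}$, get ${\rm Ker}(F)\subseteq\mathcal{P}$, removability and the equivalence $\bar F$ from Lemma~\ref{lem:pte}(1) and Lemma~\ref{lem:objective}, and then conclude via Lemma~\ref{lem:removable}, two-out-of-three on stable categories, and Proposition~\ref{prop:pte}. Your ``only if'' direction just writes out the details the paper treats as immediate from Lemma~\ref{lem:removable}.
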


\begin{proof}

The ``only if" part follows immediately from Lemma~\ref{lem:removable}. For the ``if" part, we assume that the given functor $F$ is a pretriangle-equivalence, which is objective, full and dense.

    The given functor $F$ admits the following canonical factorization.
    $$\mathcal{A}\stackrel{\pi}\longrightarrow \mathcal{A}/[{\rm Ker}(F)] \stackrel{\bar{F}}\longrightarrow \mathcal{B}$$
    Here, $\pi=\pi_{{\rm Ker}(F)}$ denotes the canonical functor. By Lemma~\ref{lem:pte}(1),  the essential kernel ${\rm Ker}(F)$ belongs to $\mathcal{P}$. By Lemma~\ref{lem:objective}, $\bar{F}$ is an equivalence of categories. Since $F$ is exact, it follows that $\pi$ sends conflations in $\mathcal{A}$ to kernel-cokernel pairs in $\mathcal{A}/[{\rm Ker}(F)]$. Thus, by the very definition, the subcategory ${\rm Ker}(F)$ is removable.

In view of Lemma~\ref{lem:removable}, we infer that $\bar{F}$ is exact and sends projectives in $\mathcal{A}/[{\rm Ker}(F)]$ to projectives in $\mathcal{B}$. Both $F$ and $\pi$ are pretriangle-equivalences. It follows from the factorization above that so is $\bar{F}$. We infer from Proposition~\ref{prop:pte} that $\bar{F}$ is an exact equivalence. Therefore, $F$ is a Frobenius quotient.
\end{proof}

The following well-known example shows the ubiquity of Frobenius quotients.

\begin{exm}
    {\rm Let $\mathcal{A}$ be any Frobenius category. An unbounded complex $P^\bullet=(P^n, d^n)$ of projective objects is acyclic if each differential $d^n\colon P^n\rightarrow P^{n+1}$ factors as
    $$P^n \stackrel{p^n}\longrightarrow Z^{n+1} \stackrel{i^{n+1}} \longrightarrow P^{n+1}$$
    and each pair $(i^n, p^n)$ is a conflation. The object $Z^{n+1}=Z^{n+1}(P^\bullet)$ is just the $(n+1)$-th cocycle of $P^\bullet$. Denote by $C_{\rm ac}(\mathcal{P})$ the category of such acyclic complexes. It becomes an exact category with conflations being chainwise-split short exact sequences. Moreover, it is a Frobenius category.

    It is well known that the zeroth-cocycle functor
    $$Z^0\colon C_{\rm ac}(\mathcal{P})\longrightarrow \mathcal{A}$$
    is a pretriangle-equivalence; see \cite[the proof of Theorem~4.3]{Kel94}. Moreover, it is full, dense and objective. To see that it is objective, we use the following observation: for any chain map $f^\bullet \colon P^\bullet \rightarrow Q^\bullet$ with $Z^0(f^\bullet)=0$, it necessarily factors through the mapping cone of the identity endomorphism on
   $$\cdots \rightarrow P^{-2}\stackrel{d^{-2}}\rightarrow P^{-1} \rightarrow 0\rightarrow P^1 \stackrel{d^1}\rightarrow P^2 \rightarrow \cdots.$$
    The essential kernel of $Z^0$ is given by contractible complexes with the $(-1)$-th differential being zero. We infer that $Z^0$ is a Frobenius quotient.}
\end{exm}

\section{The $n$-inflation categories}\label{sec:inf}

In this section, we study  the category of $n$-inflations. The main result states that any Frobenius quotient induces Frobenius quotients between the corresponding categories of $n$-inflations; see Theorem~\ref{thm:inf}.

Throughout this section, $(\mathcal{A}, \mathcal{E})$ is a Frobenius category and $n\geq 1$. By an \emph{$n$-inflation} in $\mathcal{A}$, we mean a sequence of $(n-1)$ composable inflations
$$X^1 \stackrel{\iota_X^1} \longrightarrow X^2 \stackrel{}\longrightarrow \cdots \longrightarrow X^{n-1}\stackrel{\iota_X^{n-1}}\longrightarrow X^n. $$
Such an $n$-inflation will be denoted by $(X^s;\iota_X^s)$. We mention that a $2$-inflation is just an inflation, and a $1$-inflation means a single object. A morphism
$$(f^s)\colon (X^s;\iota_X^s)\longrightarrow (Y^s;\iota_Y^s)$$
between two $n$-inflations is given by morphisms $f^s\colon X^s\rightarrow Y^s$ satisfying
$$\iota_Y^s\circ f^s=f^{s+1}\circ \iota_X^s$$
for all $1\leq s\leq n-1$. This forms the category ${\rm Inf}_n(\mathcal{A})$ of $n$-inflations. We mention its relation to the well-known Waldhausen S-construction; see \cite{War} and compare \cite{Arn, Bar, Sim, Zh}.
We write ${\rm Inf}_2(\mathcal{A})$ as ${\rm Inf}(\mathcal{A})$, and identify ${\rm Inf}_1(\mathcal{A})$ with $\mathcal{A}$ itself.

Each object $A$ in $\mathcal{A}$ gives rise to an $n$-inflation
$$\theta^1(A)=(A \stackrel{{\rm Id}_A}\longrightarrow A\longrightarrow \cdots \longrightarrow  A \stackrel{{\rm Id}_A}\longrightarrow A).$$
This gives rise to an additive functor $\theta^1\colon \mathcal{A}\rightarrow {\rm Inf}_n(\mathcal{A})$. More generally, for $1\leq s\leq n$, we have the following $n$-inflation
\begin{align}\label{equ:thetan}
    \theta^s(A)=(0\longrightarrow 0\longrightarrow \cdots \longrightarrow A \stackrel{{\rm Id}_A}\longrightarrow A\longrightarrow \cdots \longrightarrow  A \stackrel{{\rm Id}_A}\longrightarrow A),
\end{align}
which consists of $s-1$ copies of zero.

The category ${\rm Inf}_n(\mathcal{A})$ has a natural exact structure $\mathcal{E}_n$ in the following manner: a sequence
$$(X^s;\iota_X^s) \stackrel{(f^s)}\longrightarrow (Y^s;\iota_Y^s) \stackrel{(g^s)}\longrightarrow (Z^s;\iota_Z^s)$$
belongs to $\mathcal{E}_n$ if and only if each component $(f^s, g^s)$ is a conflation in $\mathcal{A}$. Moreover, $({\rm Inf}_n(\mathcal{A}), \mathcal{E}_n)$ is a Frobenius category, whose projective-injective objects are precisely  $\bigoplus_{s=1}^n\theta^s(P_s)$ for some projectives $P_s$ in $\mathcal{A}$; compare \cite{Chen11} and \cite[Section~5]{Sim15}.

In view of Lemma~\ref{lem:strictly-i}, the following observation is an immediate consequence of the definition.

\begin{lem}\label{lem:inf-ninf}
    Let $(f^s)\colon (X^s;\iota_X^s)\rightarrow (Y^s;\iota_Y^s)$ be a morphism between $n$-inflations. Then $(f^s)$ is an inflation in ${\rm Inf}_n(\mathcal{A})$ if and only if each commutative square
    $$\xymatrix{
 Y^s \ar[r]^-{\iota_Y^s} & Y^{s+1}\\
 X^s\ar[u]^-{f^s} \ar[r]^-{\iota_X^s} & X^{s+1} \ar[u]_-{f^{s+1}}
    }$$
    is strictly inflated. \hfill $\square$
\end{lem}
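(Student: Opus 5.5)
The plan is to reduce the statement to Lemma~\ref{lem:strictly-i} applied one square at a time. The key observation is that a sequence in ${\rm Inf}_n(\mathcal{A})$ is a conflation exactly when it is a conflation in each component $s$. Hence $(f^s)$ is an inflation in ${\rm Inf}_n(\mathcal{A})$ if and only if two things hold: each $f^s$ is an inflation in $\mathcal{A}$, and the componentwise cokernels, with the induced maps, again form an $n$-inflation. Here the induced maps are $\bar{\iota}^s\colon {\rm Coker}(f^s)\rightarrow {\rm Coker}(f^{s+1})$, obtained from $\iota_Y^s$.

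For the ``only if'' direction, suppose $(f^s)$ is an inflation, so there is a conflation $(X^s;\iota_X^s)\rightarrow (Y^s;\iota_Y^s)\rightarrow (Z^s;\iota_Z^s)$. Each component is a conflation, so $f^s$ is an inflation and $Z^s\simeq{\rm Coker}(f^s)$. Under this identification $\iota_Z^s$ is the morphism induced by $\iota_Y^s$, because cokernel maps are unique. The square in the statement therefore consists of four inflations: $\iota_X^s$ and $\iota_Y^s$ are inflations by the definition of $n$-inflations, and $f^s$, $f^{s+1}$ by the above. Its induced map on the cokernels of the vertical arrows is an inflation. This is condition (1) or (3) of Lemma~\ref{lem:strictly-i}, with the roles of the horizontal and vertical arrows matched appropriately, so the square is strictly inflated.

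For the ``if'' direction, suppose each square is strictly inflated. By definition, all four arrows of each square are inflations, so every $f^s$ is an inflation. Put $Z^s={\rm Coker}(f^s)$. Lemma~\ref{lem:strictly-i} then shows that the induced maps $Z^s\rightarrow Z^{s+1}$ are inflations, so $(Z^s)$ is an $n$-inflation. The cokernel maps $Y^s\rightarrow Z^s$ commute with the structure maps by construction, so they give a morphism in ${\rm Inf}_n(\mathcal{A})$. The resulting sequence is a componentwise conflation, hence lies in $\mathcal{E}_n$, and $(f^s)$ is an inflation.

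The only delicate point is bookkeeping: we must make sure that the cokernel direction in Lemma~\ref{lem:strictly-i} used here is the one induced along the $f$'s. Because strict inflatedness is symmetric, (1)$\Leftrightarrow$(3), either choice of orientation works. I therefore expect no real obstacle; the result is essentially a direct translation via Lemma~\ref{lem:strictly-i}.
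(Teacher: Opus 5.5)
Your proposal is correct and is the argument the paper intends: the paper gives no separate proof, and calls the lemma an immediate consequence of the componentwise definition of $\mathcal{E}_n$ together with Lemma~\ref{lem:strictly-i}. Your reduction is exactly that, using condition (3) there with $j=f^s$, $j'=f^{s+1}$, $i'=\iota_Y^s$. One small caveat, shared with the paper's statement: you tacitly need $n\geq 2$, because for $n=1$ there are no squares, so your step ``every $f^s$ is an inflation'' has nothing to draw on.
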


Let $m\geq 1$. We will also consider the category ${\rm Inf}_m({\rm Inf}_n(\mathcal{A}))$ of $m$-inflations in ${\rm Inf}_n(\mathcal{A})$. In view of Lemma~\ref{lem:inf-ninf}, an object in ${\rm Inf}_m({\rm Inf}_n(\mathcal{A}))$ is given by a grid in $\mathcal{A}$,
\[\xymatrix@R=13pt{
X^{1, m}   \ar[r] & X^{2,m}  \ar[r] & \cdots \ar[r] & X^{n-1, m}\ar[r] & X^{n, m}\\
X^{1, m-1} \ar[u] \ar[r] & X^{2,m-1}  \ar[u] \ar[r] & \cdots \ar[r] & X^{n-1, m-1}\ar[u] \ar[r] & X^{n, m-1}\ar[u] \\
\vdots\ar[u]    & \vdots \ar[u]  & \cdots  & \vdots\ar[u]   & \vdots  \ar[u]\\
X^{1, 2} \ar[u] \ar[r] & X^{2,2} \ar[u]  \ar[r] & \cdots \ar[r] & X^{n-1, 2} \ar[u]\ar[r] & X^{n, 2} \ar[u]\\
X^{1, 1} \ar[u]\ar[r] & X^{2,1} \ar[u] \ar[r] & \cdots \ar[r] & X^{n-1, 1} \ar[u]\ar[r] & X^{n, 1} \ar[u]
}\]
 where each small commutative square is strictly inflated. The conflations are given componentwise. By transpose, we infer the following symmetry
 $${\rm Inf}_m({\rm Inf}_n(\mathcal{A}))={\rm Inf}_n({\rm Inf}_m(\mathcal{A})). $$
 This common Frobenius category will be denoted by ${\rm Inf}_{m, n}(\mathcal{A})$, which might be called the category of \emph{$(m, n)$-inflations} in $\mathcal{A}$; compare \cite{Sim15, LO}. 

 \begin{exm}\label{exm:grid}
     {\rm Let $\mathbb{K}$ be a field. We will consider the truncated polynomial algebra $\mathbb{K}[t]/{(t^p)}$ for some $p\geq 2$, which is naturally $\mathbb{Z}$-graded. Consider the Frobenius abelian category  ${\rm mod}^\mathbb{Z}\mbox{-}\mathbb{K}[t]/{(t^p)}$ of finite- dimensional $\mathbb{Z}$-graded modules.  For $n\geq 1$, we write
$$\mathcal{S}^\mathbb{Z}_n(p)={\rm Inf}_n({\rm mod}^\mathbb{Z}\mbox{-}\mathbb{K}[t]/{(t^p)}).$$
Furthermore, $\mathcal{S}^\mathbb{Z}(p)=\mathcal{S}^\mathbb{Z}_2(p)$ is called the \emph{graded submodule category} \cite{RS08}. We mention the comparison of the ungraded submodule category with the original Birkhoff problem in \cite{GKKP}. 

More generally, we write 
$$\mathcal{S}^\mathbb{Z}_{m,n}(p)={\rm Inf}_{m, n}({\rm mod}^\mathbb{Z}\mbox{-}\mathbb{K}[t]/{(t^p)}).$$
Its objects are given by certain $m\times n$-grids of monomorphisms in  ${\rm mod}^\mathbb{Z}\mbox{-}\mathbb{K}[t]/{(t^p)}$. In view of Remark~\ref{rem:sinf}, the grid above is completely determined by the upper boundary and the rightmost boundary. Consequently, the category $\mathcal{S}^\mathbb{Z}_{m,n}(p)$ is equivalent to the category consisting of $2$-flags of monomorphisms in ${\rm mod}^\mathbb{Z}\mbox{-}\mathbb{K}[t]/{(t^p)}$; see \cite[p.198]{KLM1} and \cite[Section~4]{Sim15}. }
 \end{exm}

Assume that $n\geq 2$. Recall the functor $\theta^1\colon \mathcal{A}\rightarrow {\rm Inf}_n(\mathcal{A})$ above. We have the projection functor
$${\rm pr}^1\colon {\rm Inf}_n(\mathcal{A}) \longrightarrow \mathcal{A},$$
which sends $(X^s; \iota_X^s)$ to the first component $X^1$. Similarly,  the projection functor
$${\rm pr}^{[2, n]}\colon {\rm Inf}_n(\mathcal{A}) \rightarrow  {\rm Inf}_{n-1}(\mathcal{A})$$
sends $(X^s; \iota_X^s)$ to the  $(n-1)$-inflation starting at $X^2$ and ending at $X^n$. On the other hand, we have the \emph{extension-by-zero} functor
$${\rm Ez}\colon {\rm Inf}_{n-1}(\mathcal{A}) \longrightarrow  {\rm Inf}_{n}(\mathcal{A}), $$
which extends an $(n-1)$-inflation $(U^s; \iota_U^s)$ to an $n$-inflation by adding the zero morphism $0\rightarrow U^1$ to the left.

For any $n$-inflation $(X^s; \iota_X^s)$, we denote by $C^s$ the cokernel of $$\iota_X^{s}\circ \cdots \circ \iota_X^1\colon X^1\longrightarrow X^{s+1}. $$ Then we have the induced inflation $\iota_C^s \colon C^s\rightarrow C^{s+1}$ by $\iota_X^{s+1}$. In summary, we have an $(n-1)$-inflation
$$ {\rm Cok}(X^s; \iota_X^s)=(C^s; \iota_C^s).$$
This gives rise to the \emph{cokernel} functor
$${\rm Cok}\colon {\rm Inf}_n(\mathcal{A}) \longrightarrow  {\rm Inf}_{n-1}(\mathcal{A}).$$
The five functors above are all exact and send projectives to projectives. They induce the corresponding functors between the stable categories.

In contrast, the following \emph{embedding-into-injective} functor
$${\rm Ei}\colon \underline{\mathcal{A}}\longrightarrow \underline{\rm Inf}_n(\mathcal{A})$$ is only defined between the stable categories. For each object $A$ in $\mathcal{A}$, ${\rm Ei}(A)$ is given by the following $n$-inflation
$$A \stackrel{i_A} \longrightarrow I(A) \stackrel{{\rm Id}_{I(A)}} \longrightarrow I(A) \longrightarrow \cdots  \longrightarrow I(A) \stackrel{{\rm Id}_{I(A)}} \longrightarrow I(A),$$
where $I(A)$ is injective and $i_A$ is an inflation. For each morphism $\underline{f}\colon A\rightarrow A'$ in $\underline{\mathcal{A}}$, we fix any morphism $I(f)\colon I(A)\rightarrow I(A')$ satisfying $I(f)\circ i_A=i_{A'}\circ f$. Then ${\rm Ei}(\underline{f})$ is represented by the morphism $(f, I(f), \cdots, I(f))$ in ${\rm Inf}_n(\mathcal{A})$.

We refer to \cite[1.4]{BBD} for recollements. The following fact is known to experts.

\begin{lem}
Keep the notation above. Then we have a recollement of triangulated categories.
\[\xymatrix{
\underline{\rm Inf}_{n-1}(\mathcal{A}) \ar[rr]^-{\rm Ez} &&  \underline{\rm Inf}_{n}(\mathcal{A}) \ar@/_1.7pc/[ll]|{\rm Cok}  \ar@/^1.7pc/[ll]|{{\rm pr}^{[2, n]}}  \ar[rr]^{{\rm pr}^1} && \underline{\mathcal{A}} \ar@/_1.7pc/[ll]|{\theta^1}  \ar@/^1.7pc/[ll]|{\rm Ei}
}\]
\end{lem}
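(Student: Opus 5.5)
We verify the recollement axioms of \cite[1.4]{BBD} directly: the three adjoint pairs ${\rm Cok}\dashv {\rm Ez}\dashv {\rm pr}^{[2,n]}$ and $\theta^1\dashv {\rm pr}^1\dashv {\rm Ei}$, full faithfulness of ${\rm Ez}$ (equivalently of $\theta^1$ and ${\rm Ei}$), and the identification of the essential image of ${\rm Ez}$ with the kernel of ${\rm pr}^1$.

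\emph{The functors are triangle functors.} All functors except ${\rm Ei}$ are exact and preserve projectives, so they induce triangle functors. For ${\rm Ei}$ we first check well-definedness. If $f$ factors through a projective, then $(f,I(f),\dots,I(f))$ factors through $\theta^1$ of that projective plus $\theta^2(I(A'))$. Independence of the choice of $I(f)$ holds because two choices differ by a map vanishing on $A$, hence factoring through $\theta^2(I(A))$. The triangle structure of ${\rm Ei}$ then follows from its being a right adjoint of a triangle functor. This is a standard fact, which we use rather than checking suspension compatibility by hand.

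\emph{The adjunctions.} These are checked on stable Hom spaces.
\begin{enumerate}
\item $\theta^1\dashv{\rm pr}^1$ holds already in $\mathcal{A}$. A map $\theta^1(A)\to X$ is the same as a map $A\to X^1$, and projective-factoring maps correspond, since $\theta^1$ and ${\rm pr}^1$ preserve projectives.
\item ${\rm Ez}\dashv{\rm pr}^{[2,n]}$ holds on the nose, since a map ${\rm Ez}(U)\to X$ is just a map $U\to {\rm pr}^{[2,n]}X$.
\item ${\rm Cok}\dashv{\rm Ez}$: a map $X\to{\rm Ez}(U)$ must have zero first component. Hence it kills $X^1$ in every degree and factors uniquely through the cokernels $C^{s-1}$.
\item ${\rm pr}^1\dashv{\rm Ei}$: a map $X\to{\rm Ei}(A)$ modulo projectives should correspond to $\underline{\rm Hom}(X^1,A)$. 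Given $f\colon X^1\to A$, extend $i_Af$ along the inflations $X^1\to X^s$ using injectivity of $I(A)$. Surjectivity is then clear.
\end{enumerate}

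\emph{The expected main obstacle} is stable injectivity in (4). Suppose $(g^s)$ has $g^1$ factoring through a projective $P$, say $g^1=ba$ with $a\colon X^1\to P$. Since $P$ is injective, extend $a$ along $X^1\to X^n$. This produces a map $X\to\theta^1(P)$ followed by $\theta^1(P)\to{\rm Ei}(A)$ whose first component agrees with $g^1$. After subtracting it, the first component is zero, so the remaining map factors through ${\rm Cok}(X)$. Its target components in degrees $\geq2$ are copies of $I(A)$, so it factors through $\theta^2(I(A))$, which is projective. Compatibility of the adjunction bijections with morphisms is routine.

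\emph{Full faithfulness and the kernel.}
\begin{itemize}
\item ${\rm pr}^1\theta^1={\rm Id}$ gives full faithfulness of $\theta^1$, and hence of ${\rm Ei}$.
\item ${\rm Cok}\circ{\rm Ez}\cong{\rm Id}$ gives full faithfulness of ${\rm Ez}$.
\item For the kernel: if ${\rm pr}^1X=X^1$ is projective, then the conflation $\theta^1(X^1)\to X\to{\rm Ez}({\rm Cok}X)$ shows $X\cong{\rm Ez}({\rm Cok}X)$ stably. Here $\theta^1(X^1)$ is projective-injective, so the conflation splits in the stable category.
\item Conversely, ${\rm pr}^1{\rm Ez}=0$.
\end{itemize}
This completes the verification of the recollement axioms.
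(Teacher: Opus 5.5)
Your argument is correct and follows the paper's route: the adjunctions $({\rm Cok},{\rm Ez})$, $({\rm Ez},{\rm pr}^{[2,n]})$ and $(\theta^1,{\rm pr}^1)$ already hold on the unstable level and descend to the stable categories, and ${\rm Ei}$ is a triangle functor because it is an adjoint of one; you additionally check by hand the stable adjunction ${\rm pr}^1\dashv{\rm Ei}$, the full faithfulness, and ${\rm Ker}({\rm pr}^1)={\rm Im}({\rm Ez})$, all of which the paper leaves implicit. The only slip is in showing that ${\rm Ei}$ does not depend on the choice of $I(f)$: a difference $g$ of two choices satisfies $g\circ i_A=0$, and the map $(0,g,\dots,g)$ then factors through $\theta^2(I(A'))$, via $(0,g,\dots,g)$ followed by $(0,{\rm Id},\dots,{\rm Id})$, but in general not through $\theta^2(I(A))$ as you wrote.
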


\begin{proof}
    We mention that adjoint pairs $({\rm Cok}, {\rm Ez})$, $({\rm Ez}, {\rm pr}^{[2, n]})$ and $(\theta^1, {\rm pr}^1)$ exist already among the unstable categories. Then we have the induced ones; see \cite[Lemma~2.3]{Chen12-Ld}. To see that ${\rm Ei}$ is a triangle functor, we use the fact that the adjoint functor of any triangle functor is also triangle; see \cite[Lemma~8.3]{Kel96}.
\end{proof}

\begin{prop}\label{prop:inf}
    Let $F\colon \mathcal{A} \rightarrow \mathcal{B}$ be a pretriangle-equivalence between two Frobenius categories. Then for each $n\geq 1$, the induced functor ${\rm Inf}_n(F)\colon {\rm Inf}_n(\mathcal{A}) \rightarrow {\rm Inf}_n(\mathcal{B})$ is also a pretriangle-equivalence.
\end{prop}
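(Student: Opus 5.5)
We argue by induction on $n$, using the recollement of the preceding lemma. The case $n=1$ is the hypothesis. First, ${\rm Inf}_n(F)$ is well defined and exact: $F$ is exact, so it sends inflations to inflations and hence $n$-inflations to $n$-inflations. Conflations in ${\rm Inf}_n$ are componentwise, so they are preserved. Projectives are the objects $\bigoplus_s\theta^s(P_s)$, and ${\rm Inf}_n(F)(\theta^s(P))=\theta^s(F(P))$, which is projective because $F(P)$ is. Thus ${\rm Inf}_n(F)$ induces a triangle functor $\underline{\rm Inf}_n(F)\colon \underline{\rm Inf}_n(\mathcal{A})\rightarrow \underline{\rm Inf}_n(\mathcal{B})$, and it remains to show that this functor is an equivalence.

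Next we check that $\underline{\rm Inf}_n(F)$ commutes with the recollement functors. Strictly, on the unstable level, we have ${\rm pr}^1\circ{\rm Inf}_n(F)=F\circ{\rm pr}^1$, ${\rm Inf}_n(F)\circ{\rm Ez}={\rm Ez}\circ{\rm Inf}_{n-1}(F)$ and ${\rm Inf}_n(F)\circ\theta^1=\theta^1\circ F$. For ${\rm Cok}$ there is a natural isomorphism, because $F$ is exact and so preserves cokernels of inflations. By induction, $\underline{F}$ and $\underline{\rm Inf}_{n-1}(F)$ are triangle equivalences. We therefore have a morphism between two recollements (or at least between the two localization sequences given by ${\rm Ez}$ and ${\rm pr}^1$ with the adjoints $\theta^1$ and ${\rm Cok}$) whose outer components are equivalences.

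The standard fact is that the middle functor is then an equivalence. We argue directly.

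\emph{Density.} Every object $X$ of $\underline{\rm Inf}_n(\mathcal{B})$ fits into a triangle $\theta^1{\rm pr}^1X\to X\to {\rm Ez}\,{\rm Cok}X\to$, which comes from the conflation $\theta^1(X^1)\to X\to{\rm Ez}({\rm Cok}X)$ in ${\rm Inf}_n(\mathcal{B})$. Choose $A$ and $U$ with $F(A)\simeq X^1$ and ${\rm Inf}_{n-1}(F)(U)\simeq{\rm Cok}X$. The connecting morphism ${\rm Ez}\,{\rm Cok}X\to\Sigma\theta^1X^1$ lies in ${\rm Hom}({\rm Ez}\,{\rm Inf}_{n-1}(F)U,\Sigma\theta^1F A)$. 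We want to lift it along $\underline{\rm Inf}_n(F)$, which requires fullness on such Hom-spaces. Taking cones then shows that $X$ lies in the essential image.

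\emph{Full faithfulness.} We must show that $\underline{\rm Inf}_n(F)$ induces bijections ${\rm Hom}(\theta^1A,Y)\to{\rm Hom}(\theta^1FA,\underline{\rm Inf}_n(F)Y)$ and ${\rm Hom}({\rm Ez}\,U,Y)\to{\rm Hom}({\rm Ez}\,{\rm Inf}_{n-1}(F)U,\underline{\rm Inf}_n(F)Y)$ for all $Y$. The five lemma applied to the triangle of the previous paragraph then gives bijectivity on all Hom-spaces. Both bijections follow from the adjunctions: the first becomes ${\rm Hom}(A,{\rm pr}^1Y)\to{\rm Hom}(FA,F{\rm pr}^1Y)$, and the second becomes ${\rm Hom}(U,{\rm pr}^{[2,n]}Y)$. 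Each is bijective by hypothesis or by induction, provided the adjunction isomorphisms are compatible with $F$. This compatibility holds because ${\rm Inf}(F)$ commutes strictly with $\theta^1,{\rm pr}^1,{\rm Ez},{\rm pr}^{[2,n]}$, and the units and counits of these adjunctions are given by identity-type morphisms that $F$ preserves. The lifting of the connecting morphism needed for density is the special case $Y=\Sigma\theta^1A$.

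The main obstacle is this compatibility of the adjunction isomorphisms with ${\rm Inf}_n(F)$ on the stable level. In particular, the induced adjunctions of \cite[Lemma~2.3]{Chen12-Ld} must commute with $\underline{F}$, and $\Sigma$ must be correctly identified in the shifted Hom-spaces. We avoid ${\rm Ei}$ entirely. We only use $(\theta^1,{\rm pr}^1)$ and $({\rm Ez},{\rm pr}^{[2,n]})$, which exist unstably and are visibly compatible with $F$. We also use that $F$ commutes with $\Sigma$ up to natural isomorphism, being a triangle functor.
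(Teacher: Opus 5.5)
Your argument is correct and follows the paper's route. Both proofs induct on $n$ through the recollement, using that ${\rm Inf}_n(F)$ commutes with the recollement functors and that the two outer functors are stable equivalences. Where the paper simply invokes the comparison theorem for recollements \cite[Theorem~2.5]{PS}, you prove the needed comparison by hand, from the canonical triangle $\theta^1{\rm pr}^1X\to X\to{\rm Ez}\,{\rm Cok}\,X\to$ and the adjunctions $(\theta^1,{\rm pr}^1)$ and $({\rm Ez},{\rm pr}^{[2,n]})$. This has the small advantage of never needing compatibility with ${\rm Ei}$.

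For the five-lemma step, state explicitly that $\theta^1$ and ${\rm Ez}$ are triangle functors. Then $\Sigma\theta^1A\simeq\theta^1\Sigma A$ and $\Sigma^{-1}{\rm Ez}\,U\simeq{\rm Ez}\,\Sigma^{-1}U$, so your Hom-bijections also cover the shifted outer terms of the long exact sequence.
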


\begin{proof}
    The induced functor ${\rm Inf}_n(F)$ above is clearly exact and sends projectives to projectives. It suffices to show that it induces a stable equivalence.  The key observation is the following commutative diagram between recollements.
    \[\xymatrix{
\underline{\rm Inf}_{n-1}(\mathcal{A}) \ar[dd]_-{{\rm Inf}_{n-1}(F)} \ar[rr]^-{\rm Ez} &&  \underline{\rm Inf}_{n}(\mathcal{A}) \ar[dd]^-{{\rm Inf}_{n}(F)} \ar@/_1.7pc/[ll]|{\rm Cok}  \ar@/^1.7pc/[ll]|{{\rm pr}^{[2, n]}}  \ar[rr]^{{\rm pr}^1} && \underline{\mathcal{A}}  \ar[dd]^-{F} \ar@/_1.7pc/[ll]|{\theta^1}  \ar@/^1.7pc/[ll]|{\rm Ei}\\
\\
\underline{\rm Inf}_{n-1}(\mathcal{B}) \ar[rr]^-{\rm Ez} &&  \underline{\rm Inf}_{n}(\mathcal{B}) \ar@/_1.7pc/[ll]|{\rm Cok}  \ar@/^1.7pc/[ll]|{{\rm pr}^{[2, n]}}  \ar[rr]^{{\rm pr}^1} && \underline{\mathcal{B}} \ar@/_1.7pc/[ll]|{\theta^1}  \ar@/^1.7pc/[ll]|{\rm Ei}
}\]
The required stable equivalence follows immediately from the comparison theorem \cite[Theorem~2.5]{PS} for recollements and by induction.
\end{proof}

The following main result indicates the compatibility between inflation categories and Frobenius quotients.

\begin{thm}\label{thm:inf}
    Let $F\colon \mathcal{A} \rightarrow \mathcal{B}$ be a Frobenius quotient between two Frobenius categories. Then for each $n\geq 1$, the induced functor ${\rm Inf}_n(F)\colon {\rm Inf}_n(\mathcal{A}) \rightarrow {\rm Inf}_n(\mathcal{B})$ is also a Frobenius quotient, whose  essential kernel equals ${\rm add}\; \{\theta^j(P)\; |\; 1\leq j\leq n, P\in {\rm Ker}(F)\}$. 
\end{thm}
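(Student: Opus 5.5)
By Theorem~\ref{thm:Frob} it suffices to show that ${\rm Inf}_n(F)$ is a pretriangle-equivalence that is objective, full and dense. A Frobenius quotient is a pretriangle-equivalence: $\pi_\mathcal{F}$ is one by Lemma~\ref{lem:removable}, and $\bar F$ is an exact equivalence. Hence Proposition~\ref{prop:inf} gives the pretriangle-equivalence part. By Definition~\ref{defn:Fq}, and since an exact equivalence induces exact equivalences of inflation categories, we may assume that $\mathcal{B}=\mathcal{A}/[\mathcal{F}]$ and $F=\pi_\mathcal{F}$, where $\mathcal{F}={\rm Ker}(F)$ is removable. In particular, every object of $\mathcal{F}$ is projective-injective in $\mathcal{A}$, and $\mathcal{F}$ is closed under direct summands.

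\emph{Fullness.} Let $X=(X^s;\iota_X^s)$ and $Y=(Y^s;\iota_Y^s)$ be $n$-inflations in $\mathcal{A}$, and let $(\bar g^s)$ be a morphism between their images. Choose lifts $f^s$ of $\bar g^s$; we correct them by induction on $s$. Suppose $f^1,\dots,f^s$ already commute with the structure maps. Then $d=\iota_Y^s f^s-f^{s+1}\iota_X^s$ factors as $d=b\circ a$ with $a\colon X^s\to P$ and $P\in\mathcal{F}$. Since $P$ is injective and $\iota_X^s$ is an inflation, $a$ extends to $a'\colon X^{s+1}\to P$ with $a'\iota_X^s=a$. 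Replacing $f^{s+1}$ by $f^{s+1}+ba'$ does not change its image in $\mathcal{A}/[\mathcal{F}]$ and makes the $s$-th square commute.

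\emph{Objectivity.} We argue by induction on $n$. Let $(f^s)\colon X\to Y$ satisfy $\pi(f^s)=0$ for all $s$. Write $f^1=b a$ with $a\colon X^1\to P$ and $P\in\mathcal{F}$. The composite $X^1\to X^n$ is an inflation, so $a$ extends to $X^n$, and restricting gives a morphism $X\to\theta^1(P)$. Composing with the morphism $\theta^1(P)\to Y$ whose components are $\iota_Y\cdots\iota_Y b$, we subtract a map factoring through $\theta^1(P)$ and may assume $f^1=0$. Such a morphism factors through ${\rm Ez}({\rm pr}^{[2,n]}Y)\to Y$ via ${\rm Ez}$ applied to ${\rm pr}^{[2,n]}(f)$. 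By induction, ${\rm pr}^{[2,n]}(f)$ factors through ${\rm add}\,\theta^j(P)$ in ${\rm Inf}_{n-1}(\mathcal{A})$, and ${\rm Ez}\,\theta^j=\theta^{j+1}$, which finishes the step.

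\emph{Essential kernel.} An $n$-inflation is killed by ${\rm Inf}_n(F)$ exactly when all components lie in $\mathcal{F}$. An inflation between injectives splits, so such an inflation is isomorphic to $\bigoplus_j\theta^j(P_j)$ with $P_j$ summands of objects of $\mathcal{F}$, hence $P_j\in\mathcal{F}$. Conversely, each $\theta^j(P)$ with $P\in\mathcal{F}$ is killed.

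\emph{Density (the main obstacle).} Inflations in $\mathcal{A}/[\mathcal{F}]$ are only images of inflations up to isomorphism, so a chain cannot simply be lifted map by map. I would proceed inductively along the chain. Given a lifted inflation ending at $X^s$ and an inflation $X^s\to X^{s+1}$ in the quotient, I would use Lemma~\ref{lem:pte}(2) to lift its conflation class to a conflation $X^s\to Y\to Z$ in $\mathcal{A}$ with $\pi(Y)\simeq X^{s+1}$. An isomorphism in $\mathcal{A}/[\mathcal{F}]$ yields $Y\oplus F_1\simeq X^{s+1}\oplus F_2$ in $\mathcal{A}$ with $F_i\in\mathcal{F}$, and $X^s\to Y\to Y\oplus F_1$ is still an inflation. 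Hence we may take $Y\oplus F_1$ as the new term and continue. The resulting $n$-inflation in $\mathcal{A}$ has image isomorphic to the given chain, since the added $\mathcal{F}$-summands vanish in the quotient. The care needed here is to make the composite isomorphisms compatible along the whole chain. Together with Theorem~\ref{thm:Frob}, these steps prove the theorem.
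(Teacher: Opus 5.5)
Your reduction to $F=\pi_\mathcal{F}$, the pretriangle-equivalence part, fullness and the description of the essential kernel are fine and agree with the paper. The gap is in the inductive step for objectivity. Once $f^1=0$, the morphism through which $f$ factors is $(0,f^2,\dots,f^n)\colon X\to{\rm Ez}({\rm pr}^{[2,n]}Y)$. Its source is $X$, so it is not ${\rm Ez}({\rm pr}^{[2,n]}(f))$, and there is no morphism $X\to{\rm Ez}({\rm pr}^{[2,n]}X)$ to compose with. Suppose induction gives ${\rm pr}^{[2,n]}(f)=\beta\circ\alpha$ through some $Q\in{\rm add}\,\theta^j(\mathcal{F})$. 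Then $(0,\alpha^2,\dots,\alpha^n)$ is a morphism $X\to{\rm Ez}(Q)$ only if $\alpha^2\circ\iota_X^1=0$. The induction hypothesis gives no such control: you only know $\beta^2\alpha^2\iota_X^1=f^2\iota_X^1=0$.

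That this is a real gap is visible from the fact that your argument uses only that objects of $\mathcal{F}$ are injective, never that $\mathcal{F}$ is removable. Take $\Lambda=\mathbb{K}[t]/(t^2)$, $T=\Lambda/t\Lambda$, and the non-removable choice $\mathcal{F}={\rm add}\,\Lambda$. The morphism $(0,p)\colon(T\hookrightarrow\Lambda)\to(0\to T)$, with $p$ the projection, has both components killed by $\pi_\mathcal{F}$. Your step would factor it through $\theta^2(\Lambda)$ via $(0,{\rm Id}_\Lambda)$, which is not a morphism. In fact it factors through no object of ${\rm add}\{\theta^1(\Lambda),\theta^2(\Lambda)\}$. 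Every morphism $\theta^1(\Lambda^a)\to(0\to T)$ vanishes. A morphism $(T\hookrightarrow\Lambda)\to\theta^2(\Lambda^b)$ has second component with image in $t\Lambda^b$, which every map to $T$ kills.

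The missing idea is to use ${\rm Cok}(X)$ rather than ${\rm pr}^{[2,n]}X$ on the source side, together with removability.
\begin{itemize}
\item Since $f^1=0$, each $f^{s+1}$ vanishes on $X^1$. Hence $f^{s+1}=\bar f^s\circ p^s$, where $p^s\colon X^{s+1}\to C^s$ is the cokernel of $X^1\to X^{s+1}$.
\item So $f$ is the composite of $(0,p^1,\dots,p^{n-1})\colon X\to{\rm Ez}({\rm Cok}(X))$, then ${\rm Ez}(\bar f)$, then $(0,{\rm Id},\dots,{\rm Id})\colon{\rm Ez}({\rm pr}^{[2,n]}Y)\to Y$.
\item We have $\pi(\bar f^s)\circ\pi(p^s)=\pi(f^{s+1})=0$. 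Because $\pi$ sends conflations to kernel-cokernel pairs, $\pi(p^s)$ is a cokernel, hence an epimorphism, in $\mathcal{A}/[\mathcal{F}]$. Thus $\pi(\bar f^s)=0$.
\item The induction hypothesis now applies to $\bar f\colon{\rm Cok}(X)\to{\rm pr}^{[2,n]}Y$. Applying ${\rm Ez}$ to its factorization factors $f$ through ${\rm add}\,\theta^{j+1}(\mathcal{F})$.
\end{itemize}
This is the paper's argument in recursive form: it factors $f^2$ through the cokernel $C$ of $\iota_X^1$ and uses that $F(p)$ is a deflation.

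A minor point on density: the detour through $Y\oplus F_1\simeq X^{s+1}\oplus F_2$ is unnecessary, and would need an idempotent-splitting argument to justify. Lemma~\ref{lem:pte}(2) already gives an isomorphism $\pi(Y)\simeq X^{s+1}$ compatible with the maps from $X^s$. Transporting the next inflation along it, as the paper does, makes compatibility along the chain automatic.
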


Here, we denote by ${\rm add}$ the smallest additive subcategory closed under taking direct summands. 

\begin{proof}
   Denote by $\mathcal{P}$ the full subcategory of projectives in  $\mathcal{A}$. Set $\mathcal{F}={\rm Ker}(F)$. We have $\mathcal{F}\subseteq \mathcal{P}$. We observe that ${\rm Inf}_n(F)$ sends an $n$-inflation  $(X^s; \iota_X^s)$  to the $n$-inflation $(F(X^s); F(\iota_X^s))$. In view of Theorem~\ref{thm:Frob} and Proposition~\ref{prop:inf}, it suffices to show that ${\rm Inf}_n(F)$ is dense, full and objective.

    We first show that ${\rm Inf}_n(F)$ is dense. Let $(U^s; \iota_U^s)$ be an object in ${\rm Inf}_n(\mathcal{B})$. Since $F$ is dense, we might assume that $U^1=F(X^1)$ for some object $X^1$ in $\mathcal{A}$. Consider the following conflation
    $$F(X^1)=U^1\stackrel{\iota_U^1}\longrightarrow U^2 \stackrel{}\longrightarrow C,$$
    with $C$ the cokernel of $\iota_U^1$. We may assume that $C=F(A)$. The conflation above corresponds to an element in ${\rm Ext}^1_\mathcal{B}(F(A), F(X^1))$. By the isomorphism in Lemma~\ref{lem:pte}(2), we have a conflation
    $$X^1\stackrel{\iota_X^1}\longrightarrow X^2 \stackrel{}\longrightarrow A $$
    in $\mathcal{A}$, which is sent by $F$ to the conflation above. In particular, up to isomorphism, we might replace $U^2$ with $F(X^2)$, and $\iota_U^1$ with $F(\iota_X^1)$. We continue this process and obtain an $n$-inflation $(X^s; \iota_X^s)$ in $\mathcal{A}$, which is sent by ${\rm Inf}_n(F)$ to the given object $(U^s; \iota_U^s)$.

    For the fullness of ${\rm Inf}_n(F)$, we take two $n$-inflations   $(X^s; \iota_X^s)$  and  $(Y^s; \iota_Y^s)$ in $\mathcal{A}$. Assume that $$(g^s)\colon (F(X^s); F(\iota_X^s))\longrightarrow (F(Y^s); F(\iota_Y^s))$$
        is a morphism in ${\rm Inf}_n(\mathcal{B})$. Since $F$ is full, we take a morphism $f^1\colon X^1\rightarrow Y^1$ in $\mathcal{A}$ satisfying $F(f^1)=g^1$. For the same reason, there is a morphism $f\colon X^2\rightarrow Y^2$ satisfying $F(f)=g^2$. By $F(\iota_Y^1)\circ g^1=g^2\circ F(\iota_X^1)$, we infer that $F$ annihilates $\iota_Y^1\circ f^1-f\circ \iota_X^1$. Since $F$ is objective, there exist two morphisms $a\colon X^1\rightarrow P$ and $b\colon P \rightarrow Y^2$ with $P\in \mathcal{F}$, which satisfy
    $$\iota_Y^1\circ f^1-f\circ \iota_X^1=b\circ a. $$
Since $P$ is injective and $\iota_X^1$ is an inflation, we have a morphism $a'\colon X^2\rightarrow P$ with $a=a'\circ \iota_X^1$. Set $f^2=f+b\circ a'$. One verifies that $\iota_Y^1\circ f^1=f^2\circ \iota_X^1$ and $F(f^2)=g^2$. We might repeat the argument to obtain  suitable morphisms $f^s\colon {X^s }\rightarrow Y^s$, which consist of a required morphism $(f^s)$ in ${\rm Inf}_n(\mathcal{A})$.

It remains to show that ${\rm Inf}_n(F)$ is objective. The proof is more subtle.  For this, we take any morphism
$$(f^s)\colon (X^s; \iota_X^s) \longrightarrow  (Y^s; \iota_Y^s) $$
in ${\rm Inf}_n(\mathcal{A})$, which is annihilated by ${\rm Inf}_n(F)$. In particular, we have $F(f^s)=0$ for each $1\leq s \leq n$. Since $F$ is objective, there exist two morphisms $a^1\colon X^1 \rightarrow P^1$ and $b^1\colon P^1\rightarrow Y^1$ with $P^1\in \mathcal{F}$, which satisfy $f^1=b^1\circ a^1$. Since $P^1$ is injective, for all $s\geq 2$, there are morphisms $a^s\colon X^s\rightarrow P^1$ satisfying $a^s\circ \iota_X^{s-1}=a^{s-1}$. These morphisms form a morphism
$$(a^s)\colon (X^s; \iota_X^s)\longrightarrow \theta^1(P^1).$$
The morphism $b^1$ induces a unique morphism
$$(b^s)\colon \theta^1(P^1)\longrightarrow (Y^s; \iota_Y^s).$$
Here, we refer to (\ref{equ:thetan}) for $\theta^1(P^1)$. We now replace the given morphism $(f^s)$ by
$$(f^s)-(b^s)\circ (a^s).$$
Consequently, we will assume that $f^1=0$.

Consider the cokernel $p\colon X^2\rightarrow C$ of $\iota_X^1$. Using the assumption that $f^1=0$, we deduce that there is a unique morphism $\bar{f^2}\colon C\rightarrow Y^2$ satisfying $f^2=\bar{f^2}\circ p$. Therefore, we have
$$0=F(f^2)=F(\bar{f^2})\circ F(p).$$
Since $F(p)$ is a deflation, we infer that $F(\bar{f^2})=0$. Since $F$ is objective, there is an object $P^2\in \mathcal{F}$ such that $\bar{f^2}$ factors as $C\stackrel{c}\rightarrow P^2\stackrel{h^2}\rightarrow Y^2$. Set $e^1=0$ and $e^2=c\circ p$. By the injectivity of $P^2$, there are morphisms $e^s\colon X^s\rightarrow P^2$ satisfying $e^s\circ \iota_X^{s-1}=e^{s-1}$ for all $s\geq 3$. We form a morphism
$$(e^s)\colon (X^s; \iota_X^s)\longrightarrow \theta^2(P^2).$$
The morphism $h^2$ induces a unique morphism
$$(h^s)\colon \theta^2(P^2)\longrightarrow (Y^s; \iota_Y^s),$$
where $h^1$ is taken to be the zero morphism. We now replace $(f^s)$ by the new morphism
$$(f^s)-(h^s)\circ (e^s).$$
Consequently, we may assume that $f^1=f^2=0$. We continue this process and prove that the given morphism does factor through $\bigoplus_{1\leq j\leq n}\theta^j(P^j)$ for some $P^j\in \mathcal{F}$.  Indeed, the essential kernel of ${\rm Inf}_n(F)$ equals ${\rm Inf}_n(\mathcal{F})$. Consequently, the functor ${\rm Inf}_n(F)$ is objective. This completes the whole proof.
\end{proof}

\section{Gorenstein-projective modules and projective-module factorizations}\label{sec:PMF}

In this section, we relate the categories of graded Gorenstein-projective modules over two different graded rings. The main tool is the category of projective-module factorizations in \cite{BHU, Tri, SZ}.

Let $H$ be a group, which is written multiplicatively. Let $R=\oplus_{h\in H}R_h$ be an $H$-graded ring. Denote by ${\rm Mod}^H\mbox{-}R$ the abelian category of $H$-graded right $R$-modules, whose morphisms respect both the $R$-actions and gradings.  The full subcategory formed by graded projective modules is denoted by ${\rm Proj}^H\mbox{-}R$. For each $g\in H$ and a graded module $M=\oplus_{h\in H} M_h$, its \emph{shifted} module $M(g)$ is graded such that $M(g)_h=M_{gh}$. This gives rise to the \emph{degree-shift automorphism} $(g)$ on ${\rm Mod}^H\mbox{-}R$. We refer to \cite{NVO} for graded rings and modules.

Let $\sigma$ be a graded ring automorphism on $R$. For a graded $R$-module $M$, the \emph{twisted} module $M^\sigma$ is defined as follows. Its typical element of degree $h$ is given by $m^\sigma$ with $m\in M_h$. The $R$-action is given by $m^\sigma a=(m\sigma(a))^\sigma$ for $a\in R$. We obtain the \emph{twisting autoequivalence} $(-)^\sigma$ on ${\rm Mod}^H\mbox{-}R$.

Recall that a graded $R$-module $M$ is \emph{Gorenstein-projective} \cite{ABr69, EJ} if there exists a totally acyclic complex $P^\bullet$ of graded projective modules with $Z^{0}(P^\bullet)\simeq M$. Denote by ${\rm GProj}^H\mbox{-}R$ the full subcategory containing such modules. We mention that ${\rm Proj}^H\mbox{-}R\subseteq {\rm GProj}^H\mbox{-}R$. Since ${\rm GProj}^H\mbox{-}R$ is closed under extensions in ${\rm Mod}^H\mbox{-}R$, it becomes an exact category with conflations given by short exact sequences of Gorenstein-projective modules. Moreover, by \cite[Proposition~3.8]{Bel05},  it is a Frobenius category, whose projective-injectives are precisely graded projective modules. Denote by $\underline{\rm GProj}^H\mbox{-}R$ its stable category.

Let $S$ be an $H$-graded ring. Fix a homogeneous element $\omega\in S$ of degree $\vec{c}$, such that $\vec{c}$ is a central element in $H$. Assume  that $\omega$ is \emph{regular}, that is, a non-zero-divisor on both sides, and \emph{normal}, that is, $S\omega=\omega S$. It follows that there is a unique graded automorphism $\sigma$ on $S$ satisfying
$$a\omega=\omega \sigma(a).$$
We observe that $\sigma(\omega)=\omega$. The quadruple $(S, \omega, \vec{c}, \sigma)$ will be called an \emph{$H$-graded nc-quadruple}.

Let $M$ be a graded $S$-module. The element $\omega$ induces the following natural homomorphism in ${\rm Mod}^H\mbox{-}S$
 $$\omega_M\colon M\longrightarrow M^\sigma(\vec{c}), \; m\mapsto (m\omega)^\sigma.$$
 Following \cite{BHU, Tri, SZ}, an \emph{$H$-graded module $(n+1)$-factorization} of $\omega$ means the following sequence in ${\rm Mod}^H\mbox{-}S$
 $$ M^0\stackrel{d_M^0} \longrightarrow M^1 \stackrel{}\longrightarrow \cdots \longrightarrow M^{n-1}
 \stackrel{d_M^{n-1}} \longrightarrow  M^n \stackrel{d_M^n}\longrightarrow (M^0)^\sigma(\vec{c}), $$
subject to the relations
\begin{align*}
   &  d_M^n\circ \cdots \circ d_M^1\circ d_M^0=\omega_{M^0}, \;  (d_M^0)^\sigma(\vec{c})\circ d_M^n\circ \cdots \circ d_M^2\circ d_M^1=\omega_{M^1}, \\
    & \cdots, \mbox{ and } (d_M^{n-1}\circ \cdots \circ d_M^0)^\sigma(\vec{c})\circ d_M^n=\omega_{M^n}.
\end{align*}
Such an $(n+1)$-factorization will be denoted by $(M^t, d_M^t)$. Denote by ${\rm F}_{n+1}^H(S; \omega)$ the category of $H$-graded module $(n+1)$-factorizations of $\omega$. We mention that ${\rm F}_{n+1}^H(S; \omega)$  is equivalent to the graded module category over a certain matrix ring; see \cite[Proposition~3.1]{SZ}.

An  $(n+1)$-factorization $(M^t, d_M^t)$ is called a \emph{projective-module $(n+1)$-factorization} provided each component $M^i$ is projective. Denote by ${\rm PF}_{n+1}^H(S; \omega)$ the category of $H$-graded projective-module $(n+1)$-factorizations of $\omega$. Since it is closed under extensions  in ${\rm F}_{n+1}^H(S; \omega)$, it becomes an exact category. Moreover, it is a Frobenius category, whose projective-injectives are given by the \emph{trivial $(n+1)$-factorizations} explained below.

\begin{rem}
    Since $\omega$ is regular, $\omega_P$ is monic  for any graded projective $S$-module $P$. It follows that in any  projective-module $(n+1)$-factorization $(P^t, d_P^t)$ of $\omega$, each differential $d_P^t$ is monic.
\end{rem}

Each graded projective $S$-module $P$ yields the following $(n+1)$-factorization
\begin{align}\label{equ:psi}
\psi^0(P)=(P\stackrel{{\rm Id}_P}\longrightarrow P \stackrel{}\longrightarrow \cdots \longrightarrow P \stackrel{{\rm Id}_P}\longrightarrow P \stackrel{\omega_P} \longrightarrow P^\sigma(\vec{c})).
\end{align}
More generally, for $1\leq i\leq n$, we denote by $\psi^i(P)$ the following $(n+1)$-factorization
$$P'\stackrel{{\rm Id}_{P'}}\longrightarrow P' \stackrel{}\longrightarrow \cdots \longrightarrow P'\stackrel{\omega_{P'}} \longrightarrow P \stackrel{{\rm Id}_P}\longrightarrow P \longrightarrow \cdots \longrightarrow P $$
with $P'=(P)^{\sigma^{-1}}(\vec{c}^{-1})$, where we identify $P$ with $(P')^\sigma(\vec{c})$ and $\omega_{P'}$ is at the $i$-th position from the left.  By definition, a trivial $(n+1)$-factorization is a direct summand of  $\bigoplus_{0\leq i\leq n}\psi^i(P_i)$ for some graded projective $S$-modules $P_i$.

Consider the $H$-graded quotient ring $R=S/(\omega)$. Denote by ${\rm GProj}^H_{\rm fpd}\mbox{-}R$ the full subcategory of ${\rm GProj}^H\mbox{-}R$ formed by those graded $R$-modules which have finite projective dimension as $S$-modules. It inherits the exact structure from ${\rm GProj}^H\mbox{-}R$ and becomes a Frobenius category.  Here, we use the fact that any projective $R$-module has projective dimension at most one as an $S$-module.

Let $(P^t, d_P^t)$ be a graded projective-module $(n+1)$-factorization. Set $C^s$ to be the cokernel of the monomorphism
$$ d_P^{s-1}\circ \cdots \circ d_P^0 \colon P^0\longrightarrow P^s.$$
Since $\omega$ vanishes on $C^s$, it is a graded $R$-module, which has finite projective dimension as an $S$-module. By a graded version of \cite[Theorem~2.10]{Chen24}, it is a graded Gorenstein-projective $R$-module. In other words, we have that $C^s$ belongs to ${\rm GProj}^H_{\rm fpd}\mbox{-}R$. For $1\leq s\leq n-1$,   the monomorphism $d_P^s\colon P^s\rightarrow P^{s+1}$ induces a monomorphism $\iota_C^s\colon C^s \rightarrow C^{s+1}$. We observe that the cokernel of $\iota_C^s$ is identified with the cokernel of $d_P^s$, and belongs to ${\rm GProj}^H_{\rm fpd}\mbox{-}R$. In other words, $\iota_C^s$ is an inflation in ${\rm GProj}^H_{\rm fpd}\mbox{-}R$. Therefore, we obtain an $n$-inflation $(C^s; \iota_C^s)$, which is denoted by ${\rm Cok}(P^t, d_P^t)$. This gives rise to the \emph{cokernel} functor
$${\rm Cok}\colon {\rm PF}_{n+1}^H(S; \omega)\longrightarrow {\rm Inf}_n({\rm GProj}^H_{\rm fpd}\mbox{-}R).$$

The following result is due to \cite[Section~4]{SZ}; compare \cite[Theorem~5.6]{Chen24}.

\begin{thm}\label{thm:SZ}
    Keep the assumptions above. Then the cokernel functor above is a Frobenius quotient, whose essential kernel is given by ${\rm Add}\; \{\psi^0(S(h))\; |\; h\in H\}$.
\end{thm}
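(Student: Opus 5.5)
We will verify the four conditions of Theorem~\ref{thm:Frob}: ${\rm Cok}$ is a pretriangle-equivalence, dense, full and objective. Exactness is immediate: a conflation of projective-module factorizations is componentwise split. Applying the snake lemma to the monomorphisms $d_P^{s-1}\circ\cdots\circ d_P^0$ then gives componentwise short exact sequences of the cokernels $C^s$, which all lie in ${\rm GProj}^H_{\rm fpd}\mbox{-}R$. For projectives, compute ${\rm Cok}(\psi^i(P))$ directly. For $\psi^0(P)$ all maps $P^0\to P^s$ are identities, so ${\rm Cok}(\psi^0(P))=0$. For $i\geq 1$, the composite $P^0\to P^s$ is the identity for $s<i$ and $\omega_{P'}$ for $s\geq i$. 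Hence ${\rm Cok}(\psi^i(P))=\theta^i(P/P\omega)$ up to a twist/shift, and $P/P\omega$ is projective over $R$. So trivial factorizations go to projectives of ${\rm Inf}_n$.

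\textbf{Density.} Given an $n$-inflation $(C^s;\iota_C^s)$ in ${\rm GProj}^H_{\rm fpd}\mbox{-}R$, we build the factorization inductively using a horseshoe-type argument. Each $C^s$ has projective dimension at most one over $S$: it has finite projective dimension, and it is a syzygy-type module killed by a regular element. Start with an $S$-projective resolution $0\to P^0\to P^1\to C^1\to 0$. Given $0\to C^{s}\to C^{s+1}\to D\to 0$, take a length-one resolution $0\to Q\to Q'\to D\to 0$ and apply the horseshoe lemma. This gives $P^{s+1}=P^s\oplus Q'$, with $P^0$ replaced by $P^0\oplus Q$. These enlargements must be made compatibly, which is simplest if all kernels are absorbed by adding $\psi^0$-summands. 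Finally, since $\omega$ annihilates $C^n$, multiplication by $\omega$ on $P^n$ lands in the image of $P^0$. Regularity then yields a unique $d^n\colon P^n\to (P^0)^\sigma(\vec c)$, and the remaining cyclic relations follow from the injectivity of the maps.

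\textbf{Fullness and objectivity.} For fullness, a morphism $(g^s)$ of cokernel flags lifts along projective presentations by the comparison theorem for projective resolutions. The lift on $P^0$ is forced by restriction, and compatibility with $d^n$ again follows by cancelling the monic $\omega$.

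For objectivity, let $(f^t)$ be a morphism with ${\rm Cok}(f)=0$. Then each $f^s$ maps $P^s$ into the image of $Q^0\to Q^s$. Since $P^s$ is projective, $f^s$ factors through $Q^0$. Mimicking the objective argument in Theorem~\ref{thm:inf}, we show that $(f^t)$ factors through $\psi^0(Q^0)$. The map $P^n\to Q^0$ is well defined by monicity, and $\psi^0(Q^0)$ is a summand of $\psi^0$ of a free module. This also identifies the essential kernel as ${\rm Add}\{\psi^0(S(h))\}$: if ${\rm Cok}(P)=0$, all $d^s$ with $s<n$ are isomorphisms, so $P\simeq\psi^0(P^0)$.

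\textbf{Stable equivalence.} This is the main obstacle. We would first try induction on $n$ via recollements, as in Proposition~\ref{prop:inf}. ${\rm PF}_{n+1}$ admits an analogous recollement: taking the first component onto ${\rm PF}_1$-type data, with ${\rm PF}_n$ embedded. For the base case, ${\rm Cok}\colon{\rm PF}_2^H(S;\omega)\to{\rm GProj}^H_{\rm fpd}\mbox{-}R$ is the graded noncommutative Eisenbud/Buchweitz equivalence. We would prove it by showing that ${\rm Cok}$ induces isomorphisms on stable Hom, using Lemma~\ref{lem:pte}-style arguments, and conclude with \cite[Theorem~2.5]{PS}. Alternatively, once fullness, density and objectivity are known with a kernel consisting of projectives, it suffices to show that ${\rm Cok}$ reflects projectives and that factorization through projectives is detected on cokernels. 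The latter follows by lifting the $\theta^i$-factorizations to $\psi^i$ as computed above.
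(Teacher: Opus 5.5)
Your proof is correct in substance, but it takes a different, self-contained route. The paper verifies almost nothing here: it cites \cite{SZ} (Lemma~4.3 for full and dense, Corollary~4.5 for objective, Theorems~4.6 and 4.7 for the pretriangle-equivalence) and checks only the essential kernel, exactly as you do. Your direct checks of exactness, ${\rm Cok}(\psi^i(P))\cong\theta^i(\bar P)$, density, fullness and objectivity all go through.

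The real difference is the stable equivalence. There you should discard the recollement sketch: you never construct a recollement on ${\rm PF}_{n+1}$, and the base case is only named. Commit instead to your second argument, which is complete. Since ${\rm Cok}$ is exact and preserves projectives, $\underline{\rm Cok}$ is a triangle functor, and it is full and dense because ${\rm Cok}$ is. For faithfulness, suppose ${\rm Cok}(f)=\beta\alpha$ through a projective $E$. Enlarge $E$ to a summand-complement ${\rm Cok}(T)$ with $T=\bigoplus_i\psi^i(F_i)$ and $F_i$ graded free. Lift $\alpha,\beta$ to $a,b$ by fullness; objectivity then makes $f-ba$ factor through $\psi^0(Q^0)$. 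So ``reflects projectives'' is a consequence of faithfulness, not an extra hypothesis.

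What your route buys is that the pretriangle-equivalence costs nothing beyond fullness, objectivity, a kernel of projectives, and the fact that ${\rm Cok}$ hits every projective up to summands. The same trick would even bypass Proposition~\ref{prop:inf} inside Theorem~\ref{thm:inf}. The paper's route buys brevity at the price of depending entirely on \cite{SZ}.

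Three points need tightening.
\begin{enumerate}
\item A graded projective $R$-module need not be of the form $P/P\omega$ with $P$ projective over $S$, since idempotents need not lift. It is only a summand of $F/F\omega$ with $F$ free, which is why the summand enlargement above is needed.
\item The bound ${\rm pd}_S C\le 1$ is true, but ``syzygy-type and killed by a regular element'' does not prove it. Use the change-of-rings isomorphism ${\rm Ext}^{i+1}_S(C,P)\cong{\rm Ext}^i_R(C,P/P\omega)$ (up to twist). This vanishes for $i\geq 1$ because $C$ is Gorenstein-projective, and combined with ${\rm pd}_S C<\infty$ it forces ${\rm pd}_S C\le 1$.
\item For fullness, lift only $g^n$ to $f^n\colon P^n\to Q^n$ and restrict it to the submodules $P^s\subseteq P^n$. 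Lifting each $g^s$ separately gives no compatibility with the $d^s$.
\end{enumerate}
Density is also quicker than your horseshoe construction. Take $P^0\hookrightarrow P^n\twoheadrightarrow C^n$ with $P^0$ projective, and let $P^s$ be the preimage of $C^s$. Each $P^s$ is projective because ${\rm pd}_S(C^n/C^s)\le 1$.
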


Here, we denote by Add the smallest additive subcategory that is closed under taking direct summands and infinite coproducts.  

\begin{proof}
  By a graded version of \cite[Theorems~4.6~and~4.7]{SZ},  the cokernel functor above is a pretriangle-equivalence. It is full and dense by \cite[Lemma~4.3]{SZ}, and objective by \cite[Corollary~4.5]{SZ}. For the essential kernel, we just observe that ${\rm Cok}(P^t, d_P^t)=0$ if and only if $(P^t, d_P^t)$ is isomorphic to the trivial factorization $\psi^0(P)$ in (\ref{equ:psi}) for some graded projective $S$-module $P$.
\end{proof}

In what follows, we fix another graded automorphism $\tau$ on $S$ which satisfies $\tau(\omega)=\omega$ and $\tau^{-(n+1)}=\sigma$. Consider the skew polynomial ring $S[x; \tau]$ which satisfies $xa=\tau(a)x$ for $a\in S$. The \emph{noncommutative $(n+1)$-branched cover} \cite{Kno, MU21} of $S$ is defined to be the quotient ring
$$R_1=S[x; \tau]/{(x^{n+1}+\omega)}.$$
Consider the group $H_1$ which is obtained from $H$ by adding a new generator $\vec{x}$ subject to the relations
$$h\vec{x}=\vec{x}h \mbox{ for all } h\in H,  \mbox{ and } \vec{x}^{n+1}=\vec{c}.$$
We have a disjoint union
$$H_1=H \cup H\vec{x}\cup \cdots \cup H\vec{x}^{n}.$$
The ring $R_1$ is naturally $H_1$-graded such that ${\rm deg}(x)=\vec{x}$.

For each $(n+1)$-factorization $(M^t, d_M^t)$ in ${\rm F}^H_{n+1}(S; \omega)$, we associate an $H_1$-graded $R_1$-module $\Theta(M^t, d_M^t)=N$ as follows. For each $0\leq t\leq n$, we have
$$N_{H\vec{x}^t}=(M^t)^{\tau^t},$$
that is, a typical element in $N_{h\vec{x}^t}$ given by $(u^t)^{\tau^t}$ for $u^t\in M^t_h$. Therefore,  the $S$-action is given by
$$(u^t)^{\tau^t} a=(u^t\tau^t(a))^{\tau^t}, \mbox{ for all } a\in S.$$
The action of $x$ on $N$ is defined such that
$$(u^t)^{\tau^t} x=(d_M^t(u^t))^{\tau^{t+1}}$$
for $0\leq t\leq n-1$; and $(u^n)^{\tau^n}x=-u^0$ if $d_M^n(u^n)=(u^0)^\sigma\in (M^0)^\sigma(\vec{c})$. This defines a functor
$$\Theta\colon {\rm F}^H_{n+1}(S; \omega)\longrightarrow {\rm Mod}^{H_1}\mbox{-}R_1.$$

The following well-known fact essentially goes back to \cite[Proposition~2.1]{Kno}.

\begin{lem}\label{lem:Kno}
    Let $R_1=S[x; \tau]/{(x^{n+1}+\omega)}$. Then the functor $\Theta$ is an equivalence, which restricts to an exact equivalence
    $$\Theta\colon {\rm PF}^H_{n+1}(S; \omega)\stackrel{\sim}\longrightarrow {\rm GProj}_{\rm fpd}^{H_1}\mbox{-}R_1.$$
\end{lem}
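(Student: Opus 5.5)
To prove Lemma~\ref{lem:Kno}, I would first show that $\Theta$ is an equivalence by writing down a quasi-inverse. Given an $H_1$-graded $R_1$-module $N$, decompose it along the cosets $H_1=\bigsqcup_{t=0}^n H\vec{x}^t$. For each $0\le t\le n$, set $M^t=(N_{H\vec{x}^t})^{\tau^{-t}}$, which is an $H$-graded $S$-module. Right multiplication by $x$ maps $N_{H\vec{x}^t}$ into $N_{H\vec{x}^{t+1}}$, and the relation $xa=\tau(a)x$ is exactly what makes the induced map $d^t_M\colon M^t\to M^{t+1}$ $S$-linear after the twists. For $t=n$, multiplication by $x$ lands in $N_{H\vec{x}^{n+1}}=N_{H\vec{c}}=N_H$. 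Composing with $-1$ and the identification $(M^0)^\sigma(\vec{c})$ gives $d^n_M$; here $\tau^{-(n+1)}=\sigma$ is used. The relation $x^{n+1}=-\omega$ in $R_1$ then translates into the $n+1$ cyclic relations defining an $(n+1)$-factorization. Conversely, the definition of $\Theta$ gives a well-defined $R_1$-action precisely because those relations hold. This requires checking that $x^{n+1}$ acts as $-\omega$ on each homogeneous piece, including the twists and the use of $\tau(\omega)=\omega$. The two constructions are visibly mutually inverse on objects and morphisms, so $\Theta$ is an equivalence of abelian categories and is in particular exact.

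Next comes the restriction. As an $H$-graded $S$-module, $R_1$ is $\bigoplus_{t=0}^n Sx^t$, a free module. Hence any graded projective $R_1$-module is $S$-projective. The key identification is $\Theta(\psi^i(S(h)))\simeq R_1(h\vec{x}^{\pm i})$ up to a suitable shift, so the trivial factorizations correspond exactly to the graded projective $R_1$-modules, with summands matching Add-closure.

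From this, if $(P^t,d^t_P)$ is a projective-module factorization, then $N=\Theta(P^t,d^t_P)$ is $S$-projective. Concretely, as a graded $S$-module, $N$ is $\bigoplus_t (P^t)^{\tau^t}$ with appropriate shifts. So $N$ has finite projective dimension over $S$. To see that $N$ is Gorenstein-projective over $R_1$, I would build a complete resolution. There is a canonical short exact sequence of factorizations
\[
0\to (P^t)\to \textstyle\bigoplus_i \psi^i(P^{i})\to (P^t)' \to 0,
\]
whose middle term is trivial, and similarly in the other direction. This is the standard "multifold matrix factorization" resolution, taking syzygies by rotating the factorization. Applying $\Theta$ yields exact sequences with projective middle terms. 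Splicing these gives a totally acyclic complex of projective $R_1$-modules, where total acyclicity comes from $\mathrm{Hom}$ into projectives being exact since the sequences split over $S$ componentwise. Alternatively, I would cite a graded version of \cite[Theorem~2.10]{Chen24}: an $R_1$-module that is $S$-projective is Gorenstein-projective, since $R_1$ is a Frobenius extension of $S$.

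Conversely, let $N\in{\rm GProj}^{H_1}_{\rm fpd}\mbox{-}R_1$, so $N$ has finite projective dimension over $S$. I expect this direction to be the main obstacle. Since $N$ is a submodule (a syzygy) of a projective $R_1$-module, it is an $S$-submodule of an $S$-projective module. To conclude that $N$ is $S$-projective, I would embed $N$ into a long exact sequence of $R_1$-projectives, $0\to N\to Q^0\to Q^1\to\cdots$, with all cokernels Gorenstein-projective. Each of these is $S$-projective, and since $N$ has finite $S$-projective dimension, dimension shifting along this sequence forces $\mathrm{Ext}^{\ge1}_S(N,-)$ to vanish on $S$-projectives, so $N$ is $S$-projective. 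I would try first to invoke the Frobenius-extension argument of \cite{Chen24} for this step.

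Then $\Theta^{-1}(N)$ has projective components, and exactness of conflations is preserved in both directions because $\Theta$ is an exact equivalence of the ambient abelian categories and both exact structures are inherited from those ambient categories.
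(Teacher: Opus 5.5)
Your overall plan is the natural unpacking of what the paper only cites from \cite{Chen-Wu}: an explicit inverse of $\Theta$ obtained by splitting along the cosets $H\vec{x}^t$, matching trivial factorizations with graded projective $R_1$-modules, and using that $R_1=\bigoplus_{t=0}^{n}x^tS$ is free over $S$. The equivalence and the inclusion $\Theta({\rm PF}^H_{n+1}(S;\omega))\subseteq{\rm GProj}^{H_1}_{\rm fpd}\mbox{-}R_1$ are fine as sketches. The gap is in the converse direction, at the step you flag yourself. From $0\to N\to Q^0\to Q^1\to\cdots$ with each $Q^i$ projective over $S$, dimension shifting only gives ${\rm Ext}^j_S(N,P)\cong{\rm Ext}^{j+k}_S(K^k,P)$ for the cokernels $K^k$. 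Finiteness of ${\rm pd}_S N$ only yields ${\rm pd}_S K^k\le{\rm pd}_S N+k$. That does not force ${\rm Ext}^{j+k}_S(K^k,P)=0$ in the range $1\le j\le{\rm pd}_S N$, which is exactly the range you need. (If ``each of these is $S$-projective'' is meant for the $K^k$, it is circular.) The missing input is that every Gorenstein-projective $R_1$-module $K$ satisfies ${\rm Ext}^{\ge1}_S(K,P)=0$ for all graded projective $S$-modules $P$. Equivalently, restriction along $S\subseteq R_1$ must preserve Gorenstein-projectivity.

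This is where the Frobenius-extension property you allude to has to be used explicitly. The right $S$-linear functional picking out the coefficient of $x^n$ gives an isomorphism of graded right $R_1$-modules between $R_1$ and ${\rm Hom}_S(R_1,S)$, up to a degree shift. The reason is that for $0\le i,j\le n$ the product $x^ix^j$ has nonzero $x^n$-coefficient only when $i+j=n$, since $x^{n+1}=-\omega$. Hence coinduction ${\rm Hom}_S(R_1,-)$ carries graded projective $S$-modules to graded projective $R_1$-modules; it is exact and right adjoint to restriction. Eckmann--Shapiro then gives ${\rm Ext}^i_S(K,P)\cong{\rm Ext}^i_{R_1}(K,{\rm Hom}_S(R_1,P))=0$ for $i\ge1$. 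With this, your dimension shift yields ${\rm Ext}^{\ge1}_S(N,P)=0$. A module of projective dimension $d\ge1$ has ${\rm Ext}^d_S(N,P_d)\neq0$ for the last term $P_d$ of a shortest projective resolution, so $N$ is $S$-projective.

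The same adjunction ${\rm Hom}_{R_1}(X,{\rm Hom}_S(R_1,P))\cong{\rm Hom}_S(X,P)$ is what actually justifies your total-acyclicity claim in the other direction, namely that Hom into projectives is exact on $S$-split sequences. Stating it once therefore covers both halves.

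Finally, ``fpd'' for $R_1$ has to be read as finite projective dimension over $S[x;\tau]$, since that is how the lemma is matched with Theorem~\ref{thm:SZ} in the proof of Theorem~\ref{thm:main}. You silently work over $S$. That is legitimate only via the equivalence \cite[Lemma~4.1]{Chen-Wu} that the paper invokes, so you should say so.
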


\begin{proof}
    For the whole equivalence, a modified proof of \cite[Lemma~4.2]{Chen-Wu} works here.  For the restricted equivalence, the same argument in the proof of \cite[Proposition~4.4]{Chen-Wu} applies well. Here, we implicitly use the fact that an $R_1$-module has finite projective dimension if and only if so does the underlying $S$-module; see \cite[Lemma~4.1]{Chen-Wu}. We omit the details.
\end{proof}

The following result will be crucial for us.

\begin{prop}\label{prop:GProj}
  Recall that $R=S/(\omega)$ and $R_1=S[x; \tau]/{(x^{n+1}+\omega)}$. Then we have a Frobenius quotient
  $${\rm Cok}\circ \Theta^{-1}\colon  {\rm GProj}_{\rm fpd}^{H_1}\mbox{-}R_1\longrightarrow  {\rm Inf}_n({\rm GProj}_{\rm fpd}^{H}\mbox{-}R),$$
  whose essential kernel is given by ${\rm Add}\; \{R_1(h)\; |\; h\in H\}$.
\end{prop}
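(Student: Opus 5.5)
The plan is to obtain the statement by composing Theorem~\ref{thm:SZ} with the exact equivalence of Lemma~\ref{lem:Kno}. Let $\Theta^{-1}\colon {\rm GProj}_{\rm fpd}^{H_1}\mbox{-}R_1\rightarrow {\rm PF}^H_{n+1}(S;\omega)$ be a quasi-inverse of the restricted equivalence in Lemma~\ref{lem:Kno}. It is again an exact equivalence, since the quasi-inverse of an exact equivalence is exact. By Proposition~\ref{prop:pte}, it is also a pretriangle-equivalence.

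First, we check that the composition of an exact equivalence $E$ followed by a Frobenius quotient $G$ is again a Frobenius quotient. The cleanest route is the recognition Theorem~\ref{thm:Frob}: each of the four properties (pretriangle-equivalence, objective, full, dense) is preserved under precomposition with an equivalence.
\begin{itemize}
\item Exactness and preservation of projectives compose, and stable equivalences compose.
\item Fullness and density are clear.
\item For objectivity, take $f$ with $G(E(f))=0$. Then $E(f)$ factors through some object $K\in{\rm Ker}(G)$. Applying a quasi-inverse of $E$, $f$ factors through $E^{-1}(K)$, which lies in ${\rm Ker}(G\circ E)$.
\end{itemize}
Thus ${\rm Cok}\circ\Theta^{-1}$ is a Frobenius quotient, and its essential kernel is $\Theta({\rm Ker}({\rm Cok}))$, up to isomorphism closure.

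Next, by Theorem~\ref{thm:SZ}, ${\rm Ker}({\rm Cok})={\rm Add}\,\{\psi^0(S(h))\mid h\in H\}$. Since $\Theta$ is an equivalence, it commutes with direct sums and direct summands, so it remains to identify $\Theta(\psi^0(S(h)))$ with $R_1(h)$ up to isomorphism.

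For $P=S(h)$, the module $N=\Theta(\psi^0(P))$ has components $N_{H\vec{x}^t}=P^{\tau^t}$ for $0\le t\le n$. The action of $x$ is given by the twisted identities $P^{\tau^t}\to P^{\tau^{t+1}}$, and from the top component back to the bottom by $u\mapsto -u$ via $\omega_P$. On the other hand, $R_1(h)$ is a free $S$-module on $1,x,\dots,x^n$, with $x^{n+1}=-\omega$. I will send the generator $1^{\tau^0}\in N_h$ to $1\in R_1(h)_h$, and send the element $(1)^{\tau^t}$ of degree $h\vec{x}^t$ to $x^t$. Using $xa=\tau(a)x$, the $S$-action on $Sx^t$ corresponds to the $\tau^t$-twist. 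The relation $x^{n+1}=-\omega$ matches the sign convention in the definition of $\Theta$, together with $\tau^{-(n+1)}=\sigma$.

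The main obstacle I expect is this bookkeeping step: verifying that the twists $\tau^t$ and the degree conventions ($M(g)_h=M_{gh}$, with $\vec{c}$ central) line up exactly, so that $\Theta(\psi^0(S(h)))\cong R_1(h)$ and not some twisted or shifted variant. Even if a shift by $\vec{x}^t$ appeared, the family $\{R_1(h)\mid h\in H\}$ is stable under the relevant shifts only up to $\vec{x}$-powers. So the check must be done honestly at degree $h$, where $\psi^0$ starts at $P$ in position $t=0$.

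Given this identification, the essential kernel equals ${\rm Add}\,\{R_1(h)\mid h\in H\}$, which completes the argument.
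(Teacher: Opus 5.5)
Your proposal is correct and follows the paper's own proof: you compose the exact equivalence $\Theta^{-1}$ from Lemma~\ref{lem:Kno} with the Frobenius quotient of Theorem~\ref{thm:SZ}, and you identify the kernel through $\Theta(\psi^0(S(h)))\simeq R_1(h)$. That identification is realized by $(u)^{\tau^t}\mapsto ux^t$. One small label slip: under the convention $M(g)_h=M_{gh}$, the generator $1$ of $S(h)$ lies in degree $h^{-1}$ rather than $h$. This affects both sides equally, so the argument is unaffected.
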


\begin{proof}
   By combining the exact equivalence in Lemma~\ref{lem:Kno} and the Frobenius quotient in Theorem~\ref{thm:SZ}, we obtain the required Frobenius quotient. For the essential kernel,  we just observe that $\Theta(\psi^0(S(h)))\simeq R_1(h)$ for any $h\in H$.
\end{proof}

We have the following immediate consequence.

\begin{cor}\label{cor:fgl}
    Assume further that the $H$-graded ring $S$ is right graded-noetherian having finite right graded global dimension. Then  we have a Frobenius quotient
  $${\rm Cok}\circ \Theta^{-1}\colon  {\rm GProj}^{H_1}\mbox{-}R_1\longrightarrow  {\rm Inf}_n({\rm GProj}^{H}\mbox{-}R),$$
  which restricts to a Frobenius quotient
  $${\rm Cok}\circ \Theta^{-1}\colon  {\rm Gproj}^{H_1}\mbox{-}R_1\longrightarrow {\rm Inf}_n({\rm Gproj}^{H}\mbox{-}R).$$
  The essential kernel of the restricted functor equals ${\rm add}\; \{R_1(h)\; |\; h\in H\}$. \hfill $\square$
\end{cor}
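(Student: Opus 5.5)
The plan is to reduce Corollary~\ref{cor:fgl} to Proposition~\ref{prop:GProj} by showing that, under the extra hypotheses, the ``fpd'' conditions are automatic. Then we restrict to finitely generated objects.

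\emph{Step 1: removing the fpd condition.} Since $S$ has finite right graded global dimension, every graded $S$-module has finite projective dimension. Hence ${\rm GProj}^H_{\rm fpd}\mbox{-}R={\rm GProj}^H\mbox{-}R$. For $R_1$, we use the fact recalled in the proof of Lemma~\ref{lem:Kno}: an $R_1$-module has finite projective dimension if and only if its underlying $S$-module does. So ${\rm GProj}^{H_1}_{\rm fpd}\mbox{-}R_1={\rm GProj}^{H_1}\mbox{-}R_1$. Proposition~\ref{prop:GProj} then gives the first Frobenius quotient verbatim.

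\emph{Step 2: restriction to finitely generated modules.} Write $\Phi={\rm Cok}\circ\Theta^{-1}$. We must check two things.
\begin{enumerate}
\item $\Phi$ maps ${\rm Gproj}^{H_1}\mbox{-}R_1$ into ${\rm Inf}_n({\rm Gproj}^H\mbox{-}R)$.
\item Every finitely generated target object is hit, up to isomorphism, by a finitely generated source object.
\end{enumerate}
For (1), note that $R_1$ is a finitely generated $S$-module, free on $1,x,\dots,x^n$. So a finitely generated $R_1$-module $N$ has components $M^t$ of $\Theta^{-1}(N)$ that are finitely generated over $S$. The cokernels $C^s$ are then finitely generated over $S$, hence over $R$.

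For (2), take an $n$-inflation of finitely generated Gorenstein-projectives. Because $S$ is noetherian of finite global dimension, each $C^s$ has a finite resolution by finitely generated projective $S$-modules. The idea is to use the construction in \cite[Lemma~4.3]{SZ}, which builds the preimage factorization $(P^t,d^t_P)$ out of projective covers or resolutions of the $C^s$ and cokernels. Carried out with finitely generated projectives, this produces a projective-module factorization with finitely generated components. Its image under $\Theta$ is finitely generated. Alternatively, one can take any preimage and observe that $P^s$ is an extension of $C^s$ by $P^0$, where $P^0$ can be chosen finitely generated. This second route is the step I expect to need care, namely that the density argument can be performed with finitely generated projectives.

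\emph{Step 3: Frobenius quotient for the restriction.} Each restricted category is an extension-closed subcategory of the corresponding big Frobenius category. Its projectives are the finitely generated graded projectives, and its conflations are the inherited ones. I will then apply Theorem~\ref{thm:Frob} to the restriction.
\begin{itemize}
\item Fullness and exactness are inherited, since the subcategories are full.
\item Density is Step 2.
\item For objectivity, a morphism killed by $\Phi$ factors through ${\rm Add}\{R_1(h)\}$, i.e.\ through a graded projective. Since the source is finitely generated, the factorization passes through a finitely generated summand, which lies in ${\rm add}\{R_1(h)\}$.
\item For the pretriangle-equivalence property, the stable categories of the finitely generated parts are full triangulated subcategories of the big stable categories. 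The induced functor is fully faithful, and it is dense by Step 2, hence an equivalence.
\end{itemize}
Finally, the essential kernel is ${\rm Add}\{R_1(h)\}\cap{\rm Gproj}^{H_1}\mbox{-}R_1$. Finitely generated projectives that are summands of sums of shifts of $R_1$ lie in ${\rm add}\{R_1(h)\mid h\in H\}$, so this intersection equals ${\rm add}\{R_1(h)\mid h\in H\}$.
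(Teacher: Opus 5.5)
Your proposal is correct and is essentially the argument the paper leaves implicit. The paper states the corollary without proof, as an immediate consequence of Proposition~\ref{prop:GProj}: finite global dimension makes both fpd conditions vacuous, and the restriction to finitely generated objects is checked through Theorem~\ref{thm:Frob}, as you do.

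The density step you flag can be closed as follows. Any preimage supplied by Proposition~\ref{prop:GProj} exhibits $C^n\cong P^n/P^0$ and $C^n/C^s\cong P^n/P^s$ as cokernels of monomorphisms between projective $S$-modules, so both have projective dimension at most one over $S$. Now restart from a finitely generated graded projective $P^n\twoheadrightarrow C^n$. Its kernel $P^0$ and the preimages $P^s$ of the $C^s$ are then finitely generated (by right noetherianity) and projective. Setting $d^n(m)=(m\omega)^\sigma$ gives a map into $(P^0)^\sigma(\vec{c})$, because $C^n\omega=0$ forces $m\omega\in P^0$. This yields the required finitely generated preimage.
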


Here, we denote by ${\rm Gproj}^{H_1}\mbox{-}R_1$ and ${\rm Gproj}^{H}\mbox{-}R$ the full subcategory formed by  finitely generated graded Gorenstein-projective modules.

Let $\mathbb{K}$ be a field. Following Example~\ref{exm:grid}, we write $\mathcal{S}^\mathbb{Z}_n(p)={\rm Inf}_n({\rm mod}^\mathbb{Z}\mbox{-}\mathbb{K}[t]/{(t^p)})$.

\begin{exm}\label{exm:pq}
{\rm  Assume that $p, q\geq 2$. Denote by $L=L(p, q)$ the additive group generated by $\vec{x}$ and $\vec{y}$ subject to the relation $p\vec{x}=q\vec{y}$; this common value is denoted by $\vec{c}$. Consider the quotient algebra 
$A=\mathbb{K}[x, y]/(x^p+y^q)$ of the polynomial algebra in two variables, which is naturally $L$-graded by means of ${\rm deg}(x)=\vec{x}$ and ${\rm deg}(y)=\vec{y}$.  Since $A$ is $1$-Gorenstein, we infer that ${\rm Gproj}^L\mbox{-A}$ coincides with ${\rm MCM}^L(A)$, the category of \emph{$L$-graded maximal Cohen-Macaulay $A$-modules} \cite{AR}. 

 We claim that there are two Frobenius quotients
 $$ \mathcal{S}^\mathbb{Z}_{p-1}(q) \longleftarrow {\rm MCM}^L(A) \longrightarrow \mathcal{S}^\mathbb{Z}_{q-1}(p). $$
 Consequently, we have a stable equivalence
  $$ \underline{\mathcal{S}}^\mathbb{Z}_{p-1}(q) \simeq \underline{\mathcal{S}}^\mathbb{Z}_{q-1}(p),$$
  which is closely related to  the \emph{Happel-Seidel symmetry}; compare \cite{HS} and \cite[Theorem~6.11]{KLM1}.

For the claim, we consider the subgroup $H$ of $L$ generated by $\vec{y}$. Set $S=\mathbb{K}[y]$ and $\omega=y^q$. Then $S$ is naturally $H$-graded and ${\rm deg}(\omega)=\vec{c}$. We identify $A$ with $S[x]/(x^p+\omega)$, and $L$ with $H_1$. Then the Frobenius quotient
$$ {\rm MCM}^L(A) \longrightarrow \mathcal{S}^\mathbb{Z}_{p-1}(q)$$
follows from Corollary~\ref{cor:fgl}. For another one, we exchange the role of $x$ and $y$. We mention that graded maximal Cohen-Macaulay $A$-modules are also studied in \cite[Section~6]{JKS}.}
\end{exm}

\section{The main result}\label{sec:main}

 Let $H$ be a group. We fix an $H$-graded nc-quadruple $(S, \omega, \vec{c}, \sigma)$ and set $R=S/(\omega)$ to be the $H$-graded quotient ring. Fix $n, m\geq 1$. Moreover, we assume that $\tau$ and $\delta$ are two graded automorphisms on $S$ subject to the conditions:
 $$\tau\circ \delta =\delta \circ \tau, \tau^{-(n+1)}=\sigma=\delta^{-(m+1)}, \mbox{ and } \tau(\omega)=\omega=\delta(\omega). $$
 Fix an invertible central element $\lambda_0\in S$, which is homogeneous of degree $1_H$  and satisfies
 $$\tau(\lambda_0)=\lambda_0=\delta(\lambda_0), \mbox{ and } (\lambda_0)^{n+1}=1_S=(\lambda_0)^{m+1}.$$
 For example, we may take $\lambda_0=1_S$.

 Denote by $H_2$ the group obtained from $H$ by adjoining two new generators $\vec{x}$ and $\vec{y}$, which are subject to the relations
 $$\vec{x}\vec{y}=\vec{y}\vec{x},\;  \vec{x}^{n+1}=\vec{c}=\vec{y}^{m+1}, \mbox{ and } h\vec{x}=\vec{x}h, h\vec{y}=\vec{y}h \mbox{ for all } h\in H.$$
 Denote by $S_2=S[x; \tau][y; \delta]$ the skew polynomial ring with two variables. In particular, we have
 $$ yx=\lambda_0 xy, \; xa=\tau(a)x, \mbox{ and } ya=\delta(a)y \mbox{ for all } a\in S. $$
 Then $S_2$ is naturally $H_2$-graded by means of ${\rm deg}(x)=\vec{x}$ and ${\rm deg}(y)=\vec{y}$. We observe that $x^{n+1}+y^{m+1}+\omega$ is a homogeneous element in $S_2$. Consider the $H_2$-graded quotient ring
 $$R_2=S_2/(x^{n+1}+y^{m+1}+\omega).$$

 Recall that ${\rm GProj}^H_{\rm fpd}\mbox{-}R$ denotes the full subcategory of ${\rm GProj}^H\mbox{-}R$ formed by those graded Gorenstein-projective $R$-modules whose underlying graded $S$-modules have finite projective dimension. Denote by ${\rm Inf}_{m, n}({\rm GProj}^H_{\rm fpd}\mbox{-}R)$ the category of $(m, n)$-inflations in ${\rm GProj}^H_{\rm fpd}\mbox{-}R$.

 Similarly, we denote by ${\rm GProj}^{H_2}_{\rm fpd}\mbox{-}R_2$ the full subcategory of  ${\rm GProj}^{H_2}\mbox{-}R_2$ formed by those graded Gorenstein-projective $R_2$-modules whose underlying graded $S_2$-modules have finite projective dimension. We observe that the latter condition is equivalent to that the underlying graded $S$-modules have finite projective dimension; see \cite[Lemma~4.1]{Chen-Wu}. 

 \begin{thm}\label{thm:main}
 Keep the assumptions above. Then there is a Frobenius quotient
 $${\rm GProj}^{H_2}_{\rm fpd}\mbox{-}R_2 \longrightarrow  {\rm Inf}_{m, n}({\rm GProj}^H_{\rm fpd}\mbox{-}R),$$
 whose essential kernel equals ${\rm Add}\; \{R_2(h\vec{x}^i), R_2(h\vec{y}^j)\; |\; h\in H, 0\leq i\leq n, 0\leq j\leq m\}$.
 \end{thm}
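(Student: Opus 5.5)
The plan is to factor the desired functor as a composite of two Frobenius quotients, each built from results already established. Set $R_1=S[x;\tau]/(x^{n+1}+\omega)$, graded by $H_1$ (adjoin $\vec{x}$). Then apply Proposition~\ref{prop:GProj} a second time, now with base ring $R_1$ in place of $R$.

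\textbf{Step 1: a second nc-quadruple.} In the ring $S_1=S[x;\tau]$, regarded as $H_1$-graded, the element $\omega_1=x^{n+1}+\omega$ is homogeneous of degree $\vec{c}$. Using $\tau(\omega)=\omega$ and $\tau^{-(n+1)}=\sigma$, one checks that $\omega_1$ is normal, with
$$a\,\omega_1=\omega_1\,\sigma(a) \mbox{ for } a\in S, \qquad x\,\omega_1=\omega_1\,x.$$
So $\sigma$ extends to an automorphism $\sigma_1$ of $S_1$ with $\sigma_1(x)=x$. The element $\omega_1$ is regular because it is monic in $x$ up to the unit leading coefficient. The automorphism $\delta$ extends to $\delta_1$ on $S_1$ via $\delta_1(x)=\lambda_0^{-1}x$, which is compatible with $yx=\lambda_0xy$. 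One then checks $\delta_1(\omega_1)=\omega_1$ (using $\lambda_0^{n+1}=1$) and $\delta_1^{-(m+1)}=\sigma_1$ (using $\lambda_0^{m+1}=1$). Hence $S_1[y;\delta_1]=S_2$ and $R_2=S_1[y;\delta_1]/(y^{m+1}+\omega_1)$, with $H_2=(H_1)_1$. Moreover $S_1/(\omega_1)=R_1$.

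\textbf{Step 2: compose.} Proposition~\ref{prop:GProj} for $(S_1,\omega_1,\vec{c},\sigma_1)$ with the automorphism $\delta_1$ yields a Frobenius quotient
$${\rm GProj}^{H_2}_{\rm fpd}\mbox{-}R_2\to{\rm Inf}_m({\rm GProj}^{H_1}_{\rm fpd}\mbox{-}R_1),$$
whose essential kernel is ${\rm Add}\{R_2(h_1)\mid h_1\in H_1\}$. Here "fpd over $S_1$" agrees with "fpd over $S$" by \cite[Lemma~4.1]{Chen-Wu}. Proposition~\ref{prop:GProj} for the original data gives a Frobenius quotient
$${\rm GProj}^{H_1}_{\rm fpd}\mbox{-}R_1\to{\rm Inf}_n({\rm GProj}^H_{\rm fpd}\mbox{-}R).$$
Theorem~\ref{thm:inf} (in its Add-version, with the same proof) upgrades the latter to a Frobenius quotient on ${\rm Inf}_m$, landing in ${\rm Inf}_{m,n}$. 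A composite of Frobenius quotients is a Frobenius quotient by Theorem~\ref{thm:Frob}: pretriangle-equivalences, full, dense and objective functors are each closed under composition. For objectivity this uses that the kernel of the first functor consists of projectives, which the second functor sends to projectives, so the argument goes through.

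\textbf{Step 3: the essential kernel.} An object lies in the kernel of the composite iff its image lies in the kernel of ${\rm Inf}_m$ of the second functor. That kernel is ${\rm Add}\{\theta^j(R_1(h))\}$, where $h\in H$ and $1\le j\le m$. The step I expect to be the main obstacle is identifying the preimages: showing that the Gorenstein-projective $R_2$-modules mapping to $\theta^j(R_1(h))$ are, modulo the first kernel, exactly $R_2(h\vec{y}^{k})$.

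To do this, I would compute via $\Theta$ that $\theta^{j}(R_1(h))$, up to a degree shift, is ${\rm Cok}$ of the trivial factorization $\psi^{i}(S_1(\cdot))$, and that $\Theta(\psi^i(S_1(h_1)))\simeq R_2(h_1\vec{y}^{\,i})$ for suitable shift conventions. Since each $\psi^i$ is projective-injective, the preimage of the kernel is the Add-closure of the $R_2(h_1\vec{y}^{\,i})$. Writing $h_1=h\vec{x}^i$ with $0\le i\le n$, together with the $\vec{y}$-shifts for $0\le j\le m$, this gives precisely ${\rm Add}\{R_2(h\vec{x}^i),R_2(h\vec{y}^j)\}$. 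I would finish by checking, using $\vec{x}^{n+1}=\vec{c}=\vec{y}^{m+1}$, that no mixed shifts $\vec{x}^i\vec{y}^j$ with $i,j\ge1$ occur.
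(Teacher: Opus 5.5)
Your proposal is correct and is essentially the paper's proof. You pass through $R_1=S[x;\tau]/(x^{n+1}+\omega)$ and apply Proposition~\ref{prop:GProj} twice, the second time over $S[x;\tau]$ with $x^{n+1}+\omega$ and the variable $y$. You upgrade the first quotient via Theorem~\ref{thm:inf} and compose. The kernel is then read off from $\Phi_1(R_2(h\vec{y}^j))\simeq\theta^{m+1-j}(R_1(h))$, where $\Phi_1$ denotes your second-stage quotient out of ${\rm GProj}^{H_2}_{\rm fpd}\mbox{-}R_2$.

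Two small corrections:
\begin{enumerate}
\item Compatibility with $yx=\lambda_0xy$ forces $\delta_1(x)=\lambda_0x$, not $\lambda_0^{-1}x$.
\item Step~3 needs no separate check on mixed shifts. Fullness and objectivity of $\Phi_1$ let you lift a split monomorphism $\Phi_1(M)\to\Phi_1(N)$, with $N\in{\rm Add}\{R_2(h\vec{y}^j)\}$. This exhibits $M$ as a direct summand of $N\oplus F$ with $F\in{\rm Ker}(\Phi_1)={\rm Add}\{R_2(h_1)\mid h_1\in H_1\}$, which gives the kernel directly.
\end{enumerate}
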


 \begin{proof}
     Set $H_1$ to be the smallest subgroup of $H_2$ containing $H$ and $\vec{x}_1$. Set $R_1=S[x;\tau]/(x^{n+1}+\omega)$, which is naturally $H_1$-graded. By Proposition~\ref{prop:GProj}, there is a Frobenius quotient
     $$\Phi\colon  {\rm GProj}^{H_1}_{\rm fpd}\mbox{-}R_1\longrightarrow {\rm Inf}_n({\rm GProj}^{H}_{\rm fpd}\mbox{-}R),$$
     whose essential kernel equals ${\rm Add}\; \{R_1(h)\; |\; h\in H\}$. Applying Theorem~\ref{thm:inf}, we obtain the following Frobenius quotient
          $${\rm Inf}_m(\Phi) \colon  {\rm Inf}_m({\rm GProj}^{H_1}_{\rm fpd}\mbox{-}R_1) \longrightarrow {\rm Inf}_m({\rm Inf}_n({\rm GProj}^{H}_{\rm fpd}\mbox{-}R))={\rm Inf}_{m, n}({\rm GProj}^{H}_{\rm fpd}\mbox{-}R)),$$
          whose essential kernel equals ${\rm Add}\; \{\theta^j(R_1(h))\; |\; h\in H, 1\leq j\leq m\}$.

     We apply Proposition~\ref{prop:GProj} again by replacing $S$ by $S[x;\tau]$, $x$ by $y$, and $\omega$ by $x^{n+1}+\omega$. We obtain a Frobenius quotient
  $$\Phi_1\colon  {\rm GProj}^{H_2}_{\rm fpd}\mbox{-}R_2\longrightarrow {\rm Inf}_m({\rm GProj}^{H_1}_{\rm fpd}\mbox{-}R_1),$$
  whose essential kernel equals ${\rm Add}\; \{R_2(h_1)\; |\; h_1\in H_1\}$. By the explicit construction of $\Phi_1$, we observe that
  \begin{align}\label{equ:Phi_1}
      \Phi_1(R_2(h\vec{y}^j))\simeq \theta^{m+1-j}(R_1(h))
  \end{align}
for all $h\in H$ and $1\leq j\leq m$. By composing $\Phi_1$ and ${\rm Inf}_m(\Phi)$, we obtain the required Frobenius quotient. Moreover, by (\ref{equ:Phi_1}), we deduce the last statement about the essential kernel.
 \end{proof}

In view of Corollary~\ref{cor:fgl}, we have the following finite version of Theorem~\ref{thm:main}.

\begin{cor}\label{cor:main}
     Assume that the $H$-graded ring $S$ is right graded-noetherian of finite right graded global dimension. Then there is a Frobenius quotient
 $${\rm GProj}^{H_2}\mbox{-}R_2 \longrightarrow  {\rm Inf}_{m, n}({\rm GProj}^H\mbox{-}R),$$
 which restricts to a Frobenius quotient
 $${\rm Gproj}^{H_2}\mbox{-}R_2 \longrightarrow  {\rm Inf}_{m, n}({\rm Gproj}^H\mbox{-}R).$$
\end{cor}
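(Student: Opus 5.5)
The plan is to deduce Corollary~\ref{cor:main} from Theorem~\ref{thm:main} in the same way that Corollary~\ref{cor:fgl} was deduced from Proposition~\ref{prop:GProj}.

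\textbf{Step 1: drop the subscript ``fpd''.} If $S$ is right graded-noetherian of finite right graded global dimension, then every graded $S$-module has finite projective dimension. Hence ${\rm GProj}^H_{\rm fpd}\mbox{-}R={\rm GProj}^H\mbox{-}R$. For the other side, I need every graded $S_2$-module to have finite projective dimension, equivalently (by \cite[Lemma~4.1]{Chen-Wu}) every underlying graded $S$-module; this holds automatically. So ${\rm GProj}^{H_2}_{\rm fpd}\mbox{-}R_2={\rm GProj}^{H_2}\mbox{-}R_2$, and Theorem~\ref{thm:main} gives the first Frobenius quotient directly. Alternatively, one can check at each stage of the proof that $S[x;\tau]$ is again graded-noetherian of finite global dimension (skew Hilbert basis theorem, and ${\rm gldim}$ increases by at most one), and apply Corollary~\ref{cor:fgl} twice.

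\textbf{Step 2: the finitely generated version.} I will redo the proof of Theorem~\ref{thm:main}, replacing Proposition~\ref{prop:GProj} by the restricted statement of Corollary~\ref{cor:fgl}. The inputs are as follows.
\begin{itemize}
\item Corollary~\ref{cor:fgl} for $(S,\omega)$ gives a Frobenius quotient $\Phi\colon {\rm Gproj}^{H_1}\mbox{-}R_1\to {\rm Inf}_n({\rm Gproj}^H\mbox{-}R)$.
\item Theorem~\ref{thm:inf} then gives the Frobenius quotient ${\rm Inf}_m(\Phi)$.
\item Corollary~\ref{cor:fgl} for $(S[x;\tau],x^{n+1}+\omega)$ with the variable $y$ gives ${\rm Gproj}^{H_2}\mbox{-}R_2\to{\rm Inf}_m({\rm Gproj}^{H_1}\mbox{-}R_1)$. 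This step needs the hypotheses to hold for $S[x;\tau]$, which is guaranteed by the facts recalled in Step~1.
\end{itemize}
A composition of Frobenius quotients is again a Frobenius quotient. The quickest way to see this is Theorem~\ref{thm:Frob}: pretriangle-equivalence, fullness and density are clearly preserved under composition. Objectivity also passes to composites here, because the kernel of the first functor lies in its projectives, and the second functor kills morphisms factoring through its own kernel, which the first functor reaches by density and fullness.

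\textbf{Main obstacle.} The delicate point is this objectivity-of-composite argument. I would argue it as follows. Let $G\circ F$ kill a morphism $f$. Then $F(f)$ factors through an object of ${\rm Ker}(G)$, and that object is projective. Since $F$ is full and dense and preserves projectives, I can lift this factorization to one through a projective $Q$ with $F(Q)\in{\rm Ker}(G)$, up to an error killed by $F$. That error factors through ${\rm Ker}(F)\subseteq{\rm Ker}(G\circ F)$ by objectivity of $F$, which completes the argument.

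Alternatively, I can sidestep the issue altogether by observing directly that the unrestricted quotient from Step~1 sends finitely generated modules to finitely generated objects. In that case the restriction automatically stays full, dense (using the density argument of Theorem~\ref{thm:inf} inside the finitely generated subcategories), and objective, with its kernel consisting of the finitely generated members of ${\rm Add}$.
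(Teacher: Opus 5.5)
Your proposal is correct and is essentially the paper's argument, which the paper leaves implicit. With finite graded global dimension the fpd conditions become vacuous, and one reruns the proof of Theorem~\ref{thm:main} using the finitely generated part of Corollary~\ref{cor:fgl} for $(S,\omega)$ and for $(S[x;\tau],x^{n+1}+\omega)$, together with Theorem~\ref{thm:inf}. Your check via Theorem~\ref{thm:Frob} that a composite of Frobenius quotients is again one supplies a step the paper uses silently. The only imprecision is that projectivity of $Q$ is beside the point: what matters is $G(F(Q))=0$, so $f$ factors through $Q\oplus P\in{\rm Ker}(G\circ F)$, where $P\in{\rm Ker}(F)$.
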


 \begin{exm}\label{exm:V_4}
    {\rm We take $H={1_H}$ to be the trivial group and $S$ to be an ordinary ring.
Take $n=1=m$ and $\lambda_0$ to be $1$ or $-1$. The group $H_2$ is isomorphic to the Klein four group $V_4$. Then we have the following Frobenius quotient
$${\rm GProj}^{V_4}_{\rm fpd}\mbox{-} S_2/{(x^2+y^2+\omega)} \longrightarrow  {\rm GProj}_{\rm fpd}\mbox{-}S/{(\omega)},$$
which induces a triangle equivalence
$$\underline{\rm GProj}^{V_4}_{\rm fpd}\mbox{-} S_2/{(x^2+y^2+\omega)} \stackrel{\sim}\longrightarrow  \underline{\rm GProj}_{\rm fpd}\mbox{-}S/{(\omega)}.$$
This stable equivalence reminds us the famous Kn\"{o}rrer periodicity \cite{Kno}; compare \cite{CKMW, Chen-Wu}. However, they are very different, since the form of the Kn\"{o}rrer periodicity depends on the ground field.

Let $\mathbb{K}$ be a field. Take $S=\mathbb{K}[z]$ to be the polynomial algebra and $\omega=z^p$. Then $R=\mathbb{K}[z]/(z^p)$ is finite dimensional self-injective and every $\mathbb{K}[z]/(z^p)$-module is Gorenstein-projective. Set $\lambda_0=1$.  Then the Frobenius quotient in Corollary~\ref{cor:main} has the following form
 $${\rm MCM}^{V_4}({\mathbb{K}[x, y, z]/(x^2+y^2+z^p)}) \longrightarrow  {\rm mod}\mbox{-}\mathbb{K}[z]/(z^p).$$
 Here, we identify maximal Cohen-Macaulay $\mathbb{K}[x, y, z]/(x^2+y^2+z^p)$-modules with Gorenstein-projective $\mathbb{K}[x, y, z]/(x^2+y^2+z^p)$-modules. By the same argument, we have a Frobenius quotient
  \begin{align}\label{equ:V_4}
  {\rm MCM}^{V_4}({\mathbb{K}[[x, y, z]]/(x^2+y^2+z^p)}) \longrightarrow  {\rm mod}\mbox{-}\mathbb{K}[z]/(z^p).
  \end{align}
  We refer to Example~\ref{exm:final} for more details. 
}
 \end{exm}

\section{Maximal Cohen-Macaulay modules and weighted projective lines} \label{sec:WPL}

In this section, we apply the results to weighted projective lines with three weights. We fix a ground field $\mathbb{K}$ and three integers $p, q$ and $r$, which are at least $2$.

The truncated polynomial algebra $\mathbb{K}[t]/(t^r)$ is naturally $\mathbb{Z}$-graded by means of ${\rm deg}(t)=1$. Following Example~\ref{exm:grid}, we write $$\mathcal{S}^\mathbb{Z}_{p-1, q-1}(r)={\rm Inf}_{p-1, q-1}({\rm mod}^\mathbb{Z}\mbox{-}\mathbb{K}[t]/(t^r)),$$
which is the  category of $(p-1, q-1)$-inflations in ${\rm mod}^\mathbb{Z}\mbox{-}\mathbb{K}[t]/(t^r)$.

Denote by $L=L(p, q, r)$ the rank one abelian group generated by $\vec{x}, \vec{y}$ and $\vec{z}$, which are subject to the relations $p\vec{x}=q\vec{y}=r\vec{z}$.
This common element is denoted by $\vec{c}$, and is called the \emph{canonical element}. Denote by $H$ the cyclic subgroup of $L$ generated by $\vec{z}$. We have a disjoint union
$$L=\bigcup_{0\leq i\leq p-1, 0\leq j\leq q-1} H+i\vec{x}+j\vec{y}.$$

 Consider the following algebra
$$S(p, q, r)=\mathbb{K}[x, y, z]/(x^p+y^q+z^r),$$
which is naturally $L$-graded by means of ${\rm deg}(x)=\vec{x}$, ${\rm deg}(y)=\vec{y}$ and ${\rm deg}(z)=\vec{z}$.
We observe that the subalgebra
$$S(p, q, r)_H=\bigoplus_{\vec{l} \in H} S(p, q, r)_{\vec{l}}$$
equals $\mathbb{K}[x^p, z]$ and is isomorphic to a polynomial algebra in two variables.

Since $S(p, q, r)$ is graded Gorenstein, we have
$${\rm Gproj}^{L}\mbox{-}S(p, q,r)={\rm MCM}^L(S(p,q, r)).$$
Moreover, an $L$-graded $S(p,q, r)$-module $M$ is maximal Cohen-Macaulay if and only if for any $0\leq i\leq p-1, 0\leq j\leq q-1$,  the restriction
$$M_{H+i\vec{x}+j\vec{y}}=\bigoplus_{\vec{l}\in H+i\vec{x}+j\vec{y}} M_{\vec{l}}$$
is a graded projective module over $S(p, q, r)_H$; compare \cite[Lemma~3.3]{Chen12-Ld}. Moreover, we visualize such a graded module $M$ as follows.
 \[\xymatrix@R=15pt@C=15pt{
M_{H+(q-1)\vec{y}}  \ar[r]^-x & M_{H+\vec{x}+(q-1)\vec{y}}  \ar[r] & \cdots \ar[r] & M_{H+(p-2)\vec{x}+(q-1)\vec{y}} \ar[r]^x & M_{H+(p-1)\vec{x}+(q-1)\vec{y}} \\
M_{H+(q-2)\vec{y}} \ar[u]^y \ar[r]^-x & M_{H+\vec{x}+(q-2)\vec{y}}  \ar[u]^y \ar[r] & \cdots \ar[r] & M_{H+(p-2)\vec{x}+(q-2)\vec{y}} \ar[u]^y \ar[r]^x & M_{H+(p-1)\vec{x}+(q-2)\vec{y}} \ar[u]_-y \\
\vdots\ar[u]    & \vdots \ar[u]  & \cdots  & \vdots\ar[u]   & \vdots  \ar[u]\\
M_{H+\vec{y}} \ar[u] \ar[r]^-x & M_{H+\vec{x}+\vec{y}} \ar[u]  \ar[r] & \cdots \ar[r] & M_{H+(p-2)\vec{x}+\vec{y}} \ar[u]\ar[r]^-x & M_{H+(p-1)\vec{x}+\vec{y}}  \ar[u]\\
M_H \ar[u]^y\ar[r]^-x & M_{H+\vec{x}} \ar[u]^y \ar[r] & \cdots \ar[r] & M_{H+(p-2)\vec{x}}\ar[u]^-y \ar[r]^-x & M_{H+(p-1)\vec{x}} \ar[u]_-y
}\]
Here, the horizontal arrows denote the action of $x$ and the vertical ones denote the action of $y$. We omit the $x$-action on entries in the rightmost column and the $y$-action on entries in  the top row. For any $1\leq i\leq p-1$ and $1\leq j\leq q-1$, we denote by $C^{i,j}$ the cokernel of the following map
$$M_{H+i\vec{x}}\oplus M_{H+j\vec{y}}\xrightarrow{(y^j, \; x^i)}  M_{H+i\vec{x}+j\vec{y}}.$$
We observe that $C^{i,j}$ is a finite-dimensional $\mathbb{Z}$-graded $\mathbb{K}[t]/(t^r)$-module, where $t$ acts by $z$. Moreover, the actions of $x$ and $y$ on $M$ induce monomorphisms between these modules.
\[\xymatrix{
C^{i, j+1} \ar[rr]^-{\bar{x}} && C^{i+1, j+1}\\
C^{i, j}\ar[u]^-{\bar{y}} \ar[rr]^-{\bar{x}} && C^{i+1, j} \ar[u]_-{\bar{y}}
}\]
Indeed, these data form an object $(C^{i,j})$ in $\mathcal{S}^\mathbb{Z}_{p-1, q-1}(r)$. We will denote $(C^{i,j})$ by
${\rm DCok}(M)$, and call it the \emph{double-cokernel} of $M$. This certainly gives rise to a well-defined functor
$${\rm DCok}\colon  {\rm MCM}^L(S(p,q, r)) \longrightarrow \mathcal{S}^\mathbb{Z}_{p-1, q-1}(r).$$

The following result is essentially a special case of Theorem~\ref{thm:main}.

\begin{prop}\label{prop:WPL}
    Keep the notation above. Then the functor ${\rm DCok}$ is a Frobenius quotient, whose essential kernel equals
    $${\rm add}\; \{S(p, q, r)(\vec{l})\; |\; \vec{l}\in \bigcup_{0\leq i\leq p-1, 0\leq j\leq q-1} (H+i\vec{x} \cup H+j\vec{y})\}.$$
 In particular, we have an induced stable equivalence
 $$ \underline{\rm MCM}^L(S(p,q, r)) \stackrel{\sim}\longrightarrow \underline{\mathcal{S}}^\mathbb{Z}_{p-1, q-1}(r). $$
\end{prop}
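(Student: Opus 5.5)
We will deduce the statement from Corollary~\ref{cor:main}, restricted to finitely generated modules. Take $S=\mathbb{K}[z]$, graded by the cyclic group $H=\mathbb{Z}\vec{z}$ with ${\rm deg}(z)=\vec{z}$, and take $\omega=z^r$ with $\vec{c}=r\vec{z}$. Then $\sigma={\rm Id}$, and we choose $\tau=\delta={\rm Id}$, $\lambda_0=1$, $n=p-1$ and $m=q-1$. The nc-quadruple conditions are trivially satisfied. In this setting:
\begin{itemize}
\item $R=\mathbb{K}[z]/(z^r)\cong\mathbb{K}[t]/(t^r)$, which is self-injective, so ${\rm Gproj}^H\mbox{-}R={\rm mod}^{\mathbb{Z}}\mbox{-}\mathbb{K}[t]/(t^r)$;
\item $H_2$ is generated by $\vec{z},\vec{x},\vec{y}$ subject to $p\vec{x}=q\vec{y}=r\vec{z}$, so $H_2=L(p,q,r)$;
\item $R_2=\mathbb{K}[x,y,z]/(x^p+y^q+z^r)=S(p,q,r)$.
\end{itemize}
Since $\mathbb{K}[z]$ is graded-noetherian of global dimension one, Corollary~\ref{cor:main} gives a Frobenius quotient ${\rm MCM}^L(S(p,q,r))={\rm Gproj}^L\mbox{-}R_2\to\mathcal{S}^\mathbb{Z}_{p-1,q-1}(r)$. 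This is the composite ${\rm Inf}_{q-1}(\Phi)\circ\Phi_1$ from the proof of Theorem~\ref{thm:main}.

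The essential kernel comes from Theorem~\ref{thm:main}, in the finitely generated version via Corollary~\ref{cor:fgl}: it is ${\rm add}\{R_2(h\vec{x}^i),R_2(h\vec{y}^j)\}$. In additive notation this is $\{S(p,q,r)(\vec{l})\mid \vec{l}\in H+i\vec{x}\ \text{or}\ H+j\vec{y}\}$, which is exactly the set in the statement. The stable equivalence is then automatic, since every Frobenius quotient is a pretriangle-equivalence by Theorem~\ref{thm:Frob}.

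The main step is to identify this composite with ${\rm DCok}$ up to natural isomorphism. A functor naturally isomorphic to a Frobenius quotient is again one, because Theorem~\ref{thm:Frob} characterizes Frobenius quotients by invariant properties. We unwind the construction in two stages.
\begin{itemize}
\item \textbf{The first stage $\Phi_1$.} Here $\Theta^{-1}$ views $M$, restricted to the cosets $H_1+j\vec{y}$, as a factorization of $x^p+z^r$ over $S[x]$. The maps are given by the action of $y$ (signs on the last map do not affect cokernels). Applying ${\rm Cok}$ gives the $R_1$-modules $D^j={\rm Coker}(y^j\colon M_{H_1}\to M_{H_1+j\vec{y}})$, for $1\le j\le q-1$, with inclusions induced by $y$.
\item \textbf{The second stage ${\rm Inf}_{q-1}(\Phi)$.} For each $j$, $\Phi$ applies the same procedure in the $x$-direction. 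It produces ${\rm Coker}\bigl(x^i\colon D^j_{H}\to D^j_{H+i\vec{x}}\bigr)$.
\end{itemize}
By right exactness, these are the cokernels of $(y^j,\,x^i)\colon M_{H+i\vec{x}}\oplus M_{H+j\vec{y}}\to M_{H+i\vec{x}+j\vec{y}}$, that is, $C^{i,j}$. The structure maps on both sides are induced by $\bar{x}$ and $\bar{y}$, and all identifications are natural in $M$.

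I expect the main obstacle to be the bookkeeping. One has to check that the twists and sign conventions in $\Theta$, in particular $u^n x=-u^0$, and the grid orientation, namely which index is $m$ and which is $n$, match the definition of ${\rm DCok}$ exactly. At worst they give an isomorphic grid, for instance via a sign automorphism, which suffices.
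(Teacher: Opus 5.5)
Your proposal is correct and is essentially the paper's proof: you specialize Theorem~\ref{thm:main} and Corollary~\ref{cor:main} to $S=\mathbb{K}[z]$, $\omega=z^r$, $\tau=\delta={\rm Id}$, $\lambda_0=1$, $n=p-1$, $m=q-1$. You then unwind ${\rm Inf}_{q-1}(\Phi)\circ\Phi_1$ in two stages (cokernels of $y^j$, then of $x^i$) to identify it with ${\rm DCok}$, and read off the essential kernel from Theorem~\ref{thm:main}. The paper leaves the parameter choices and the invariance of Frobenius quotients under natural isomorphism implicit; your write-up simply makes those steps explicit.
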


\begin{proof}
    Set $H_1=H+\mathbb{Z}\vec{x}$. We closely follow the proof of Theorem~\ref{thm:main}. Let $M$ be any graded maximal Cohen-Macaulay module over $S(p, q,r)$. Then $M$ is identified with the following  $q$-factorization of $x^p+z^r$ over $\mathbb{K}[x, z]$.
    $$M_{H_1}\stackrel{y}\longrightarrow M_{H_1+\vec{y}} \longrightarrow \cdots \longrightarrow  M_{H_1+(q-2)\vec{y}}\stackrel{y}\longrightarrow M_{H_1+(q-1)\vec{y}} \stackrel{y}\longrightarrow  M_{H_1}(\vec{c})$$
    Applying $\Phi_1$ therein, we obtain a $(q-1)$-inflation
    $$\overline{M}_{H_1+\vec{y}} \stackrel{\bar{y}}\longrightarrow \overline{M}_{H_1+2\vec{y}}  \longrightarrow \cdots \longrightarrow \overline{M}_{H_1+(q-2)\vec{y}}  \stackrel{\bar{y}}\longrightarrow \overline{M}_{H_1+(q-1)\vec{y}}. $$
Here, each component $\overline{M}_{H_1+j\vec{y}} $  is the cokernel of the map
$$y^j\colon M_{H_1}\longrightarrow M_{H_1+j\vec{y}}.$$

   For each  $1\leq j\leq q-1$,  the component $\overline{M}_{H_1+j\vec{y}} $ is viewed as the following $(p-1)$-factorization of $z^r$  in $\mathbb{K}[z]$.
        $$ \overline{M}_{H+j\vec{y}}  \stackrel{x}\longrightarrow \overline{M}_{H+\vec{x}+j\vec{y}} \longrightarrow \cdots \longrightarrow  \overline{M}_{H+(p-2)\vec{x}+j\vec{y}}\stackrel{x}\longrightarrow\overline{M}_{H+(p-1)\vec{x}+j\vec{y}}\stackrel{x}\longrightarrow  M_{H+j\vec{y}}(\vec{c})$$
        Applying $\Phi$ therein to this component, we obtain a $(p-1)$-inflation in ${\rm mod}^\mathbb{Z}\mbox{-}\mathbb{K}[t]/(t^r)$:
        $$C^{1, j}\stackrel{\bar{x}}\longrightarrow C^{2, j} \longrightarrow \cdots \longrightarrow C^{p-2, j}\stackrel{\bar{x}}\longrightarrow C^{p-1, j}.$$
        Here, we implicitly use the fact that $C^{i, j}$ is isomorphic to the cokernel of the following map
        $$x^i\colon \overline{M}_{H+j\vec{y}}\longrightarrow  \overline{M}_{H+i\vec{x}+j\vec{y}}.$$

        Letting $j$ vary, we obtain the $(p-1, q-1)$-inflation $(C^{i,j})$ above. In other words, the composition ${\rm Inf}_{q-1}(\Phi)\circ \Phi_1$ coincides with ${\rm DCok}$. Then the required statement  follows immediately from Theorem~\ref{thm:main} and Corollary~\ref{cor:main}.
\end{proof}

\begin{rem}
    We observe the symmetry among the parameters $p, q$ and $r$. Then the stable equivalence above implies the following stable equivalences,
    $$\underline{\mathcal{S}}^\mathbb{Z}_{p-1, q-1}(r)\simeq  \underline{\mathcal{S}}^\mathbb{Z}_{p-1, r-1}(q)\simeq \underline{\mathcal{S}}^\mathbb{Z}_{q-1, r-1}(p)$$
    which might be viewed as the  \emph{Happel-Seidel-type symmetry}; compare \cite[Theorem~6.11]{KLM1} and Example~\ref{exm:pq}.
\end{rem}

Denote by $\mathbb{X}(p, q,r)$ the \emph{weighted projective line} \cite{GL} with weight sequence $(p,q,r)$. The structure sheaf is denoted by $\mathcal{O}$. The category of vector bundles over $\mathbb{X}(p, q,r)$ is denoted by ${\rm vect}\mbox{-}\mathbb{X}(p, q,r)$.

By \cite[Theorem~5.8]{GL}, the sheafification yields an equivalence
$$\widetilde{(-)}\colon {\rm MCM}^L(S(p,q, r)) \longrightarrow  {\rm vect}\mbox{-}\mathbb{X}(p, q,r),$$
which sends $S(p, q, r)(\vec{l})$ to $\mathcal{O}(\vec{l})$ for each $\vec{l}\in L$. Moreover, it transfers the exact structure on  ${\rm MCM}^L(S(p,q, r))$ to ${\rm vect}\mbox{-}\mathbb{X}(p, q,r)$. To be more precise, a conflation in ${\rm vect}\mbox{-}\mathbb{X}(p, q,r)$ corresponds to the \emph{distinguished short exact sequences} of vector bundles \cite{KLM1}, that is, those short exact sequences
$$\eta\colon 0\longrightarrow \mathcal{F}_1\longrightarrow \mathcal{F}_2\longrightarrow \mathcal{F}_3\longrightarrow 0$$
of vector bundles such that ${\rm Hom}(\mathcal{O}(\vec{l}), \eta)$ is exact for any $\vec{l}\in L$. Equipped with such conflations, ${\rm vect}\mbox{-}\mathbb{X}(p, q,r) $ becomes a Frobenius category, whose projective-injectives are precisely finite direct sums of line bundles.

In view of Example~\ref{exm:grid}, the following result is essentially announced in \cite[p.198]{KLM1} and mentioned in  \cite[Theorem~4.4]{Sim15}. The setting there is  quite different, and there is  no detailed proof in the literature. 

\begin{thm}\label{thm:WPL}
    The composition of ${\rm DCok}$ and a quasi-inverse of $\widetilde{(-)}$ yields a Frobenius quotient
    $$ {\rm vect}\mbox{-}\mathbb{X}(p, q,r) \longrightarrow {\mathcal{S}}^\mathbb{Z}_{p-1, q-1}(r), $$
    whose essential kernel equals
     $${\rm add}\; \{\mathcal{O}(i\vec{x}+n\vec{z}), \mathcal{O}(j\vec{y}+n\vec{z})\; |\; 0\leq i\leq p-1, 0\leq j\leq q-1, n\in \mathbb{Z}\}.$$
\end{thm}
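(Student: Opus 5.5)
The plan is to transport Proposition~\ref{prop:WPL} along the sheafification equivalence. By \cite[Theorem~5.8]{GL}, $\widetilde{(-)}\colon {\rm MCM}^L(S(p,q,r))\rightarrow {\rm vect}\mbox{-}\mathbb{X}(p,q,r)$ is an equivalence of categories. By construction, the Frobenius exact structure on ${\rm vect}\mbox{-}\mathbb{X}(p,q,r)$ (the distinguished short exact sequences) is the one transferred from ${\rm MCM}^L(S(p,q,r))$. Hence $\widetilde{(-)}$ is an exact equivalence, and so is any quasi-inverse $G$. We then consider the composition ${\rm DCok}\circ G$ and verify the recognition criterion of Theorem~\ref{thm:Frob}.

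The first step is to note that exact equivalences between Frobenius categories preserve projectives, since projectivity is characterized by the vanishing of ${\rm Ext}^1$. They also induce triangle equivalences on stable categories. In particular $G$ is a pretriangle-equivalence, and it is full, dense and objective, because an equivalence has zero essential kernel and is faithful. By Proposition~\ref{prop:WPL}, ${\rm DCok}$ is a Frobenius quotient, so by Theorem~\ref{thm:Frob} it is a pretriangle-equivalence that is objective, full and dense.

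The second step is to show that each of these four properties is preserved under precomposition with the equivalence $G$. Fullness and density are clear. For pretriangle-equivalences, both functors are exact, send projectives to projectives, and induce stable equivalences, so the composite does too. For objectivity, suppose $({\rm DCok}\circ G)(f)=0$. Then $G(f)$ factors through ${\rm Ker}({\rm DCok})$. Applying $\widetilde{(-)}$ and using $\widetilde{G(f)}\simeq f$, we get that $f$ factors through $\widetilde{{\rm Ker}({\rm DCok})}={\rm Ker}({\rm DCok}\circ G)$. Theorem~\ref{thm:Frob} then shows that ${\rm DCok}\circ G$ is a Frobenius quotient. Alternatively, one can work directly with Definition~\ref{defn:Fq}: transport the removable subcategory along $\widetilde{(-)}$ and compose the induced exact equivalence of factor categories with $\overline{{\rm DCok}}$.

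The last step identifies the essential kernel. It is the image under $\widetilde{(-)}$ of the essential kernel in Proposition~\ref{prop:WPL}, which is the additive closure of the $S(p,q,r)(\vec{l})$ with $\vec{l}\in H+i\vec{x}$ or $\vec{l}\in H+j\vec{y}$, where $0\le i\le p-1$ and $0\le j\le q-1$. Since $H=\mathbb{Z}\vec{z}$, these degrees are exactly $i\vec{x}+n\vec{z}$ and $j\vec{y}+n\vec{z}$ with $n\in\mathbb{Z}$. Sheafification sends $S(p,q,r)(\vec{l})$ to $\mathcal{O}(\vec{l})$, and ${\rm add}$ is preserved by equivalences, which gives the stated description.

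The only delicate point, which I expect to be minor, is ensuring that exactness and projective-preservation of $G$ hold for the specific Frobenius structure on vector bundles. This is immediate because the conflations and projective-injectives on ${\rm vect}\mbox{-}\mathbb{X}(p,q,r)$ were defined by transfer; the projective-injectives are the line bundles, that is, the images of the graded free modules $S(p,q,r)(\vec{l})$.
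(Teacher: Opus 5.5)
Your proposal is correct and follows the paper's proof: the paper also notes that sheafification is an exact equivalence sending $S(\vec{l})$ to $\mathcal{O}(\vec{l})$, and then deduces everything from Proposition~\ref{prop:WPL}. Your check via Theorem~\ref{thm:Frob} that precomposing a Frobenius quotient with an exact equivalence gives a Frobenius quotient, together with your transport of the essential kernel, just spells out what the paper calls immediate.
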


\begin{proof}
Set $S=S(p, q ,r)$.    The sheafification $\widetilde{(-)}$ above is an exact equivalence, which sends $S(\vec{l})$ to $\mathcal{O}(\vec{l})$. Now the required statements follow immediately from Proposition~\ref{prop:WPL}. 
\end{proof}

\begin{rem}
    If $(p,q,r)=(2,3,r)$, the Frobenius quotient above is of the form
    $${\rm vect}\mbox{-}\mathbb{X}(2, 3,r) \longrightarrow {\mathcal{S}}^\mathbb{Z}_{1, 2}(r)={\mathcal{S}^\mathbb{Z}}(r),$$
    with essential kernel 
    $$\textup{add}\; \{\mathcal{O}(i\vec{x}+j\vec{y}+n\vec{z})\; |\; (i,j)=(0,0),(0,1),(0,2),(1,0),\;\textup{and}\;n\in\mathbb{Z}\}.$$
    Up to a degree-shift by $\vec{x}+2\vec{y}$, the line bundles in the essential kernel coincide  with the \emph{fading line bundles} in \cite{KLM2}. Thus, this Frobenius quotient recovers the main result of \cite{KLM2}. The original functor presented in \cite{KLM2} is quite mysterious; compare \cite{Chen12}. In contrast, the functor in Theorem~\ref{thm:WPL} is more explicit.
\end{rem}

\begin{exm}\label{exm:final}
{\rm Set $S=S(2,2,p)$ and $L=L(2,2,p)$. We identify $L/{\mathbb{Z}\vec{z}}$ with the Klein four group $V_4$. Set $\widehat{S}=\mathbb{K}[[x, y, z]]/(x^2+y^2+z^p)$. Consider the following commutative diagram. 
\[\xymatrix{ 
{\rm MCM}^L(S) \ar[d]_-{c}\ar[rr]^-{\rm DCok} && {\rm mod}^\mathbb{Z}\mbox{-}{\mathbb{K}[z]/(z^p)}\ar[d]^-{U}\\
{\rm MCM}^{V_4}(\widehat{S}) \ar[rr]^-{\rm DCok} && {\rm mod}\mbox{-}{\mathbb{K}[z]/(z^p)}
}\]
The lower Frobenius quotient is studied in Example~\ref{exm:V_4}, which is also given by a similar double-cokernel construction. We denote by $U$ the forgetful functor, and by $c$ the \emph{completion} functor \cite{AR}.

By using  smash products in \cite{CM} and \cite[Theorem~3.8]{HY}, the completion functor $c$ preserves indecomposability and Auslander-Reiten sequences. Moreover, by \cite[Theorem~3.2(a)]{AR}, it is dense.  Consequently, it induces an isomorphism  of quivers
$$\Gamma(\textup{MCM}^{L}(S))/[\mathbb{Z}\vec{z}] \stackrel{\sim}\longrightarrow \Gamma (\textup{MCM}^{V_4} (\widehat{S})).$$
Here, both $\Gamma(\textup{MCM}^{L}(S))$ and $\Gamma (\textup{MCM}^{V_4} (\widehat{S}))$ denote the corresponding Auslander-Reiten quivers.  By $\Gamma(\textup{MCM}^{L}(S))/[\mathbb{Z}\vec{z}]$, we mean the quotient quiver modulo the $\vec{z}$-action.

We identify $ {\rm MCM}^L(S)$ with ${\rm vect}\mbox{-}\mathbb{X}(2,2, p)$.  It is well known that its Auslander-Reiten quiver has the shape $\mathbb{Z}\widetilde{D}_{p+2}$. Under the action of $\vec{z}$ on $\textup{vect}\mbox{-}\mathbb{X}(2,2, p)$, we infer that the quotient quiver $\Gamma(\textup{MCM}^{L}(S))/[\mathbb{Z}\vec{z}]$ is given by the  double quiver of $\widetilde{D}_{p+2}$, and the projective-injective objects correspond to the four endpoints. Consequently, the Auslander-Reiten quiver of the stable category $\underline{\textup{MCM}}^{V_4}(\widehat{S})$ is given by the  double quiver of $A_{p-1}$.

In what follows, we take $p=3$ to illustrate the argument. The Auslander-Reiten quiver of $\textup{vect}\mbox{-}\mathbb{X}(2,2,3)$ is as follows. 
\begin{figure}[h]
    \begin{center}
\begin{tikzpicture}
\node(q5)at(-0.8,-1){$\cdots$};
    \node(q6)at(-0.8,-0.4){$\cdots$};
    \node(q3)at(-0.8,1){$\cdots$};
    \node(02)at(0,2){\tiny{$\diamondsuit$}};
    \node(01)at(0,1.4){\tiny{$\square$}};
    \node(04)at(0,0){\tiny{$\triangle$}};
    \node(13)at(0.8,1){\tiny{$\bigtriangledown$}};
    \node(15)at(0.8,-0.4){$\circ$};
    \node(16)at(0.8,-1){$\bullet$};
    \node(24)at(1.6,0){\tiny{$\triangle$}};
    \node(21)at(1.6,1.4){\tiny{$\diamondsuit$}};
    \node(22)at(1.6,2){\tiny{$\square$}};
    \node(33)at(2.4,1){\tiny{$\bigtriangledown$}};
    \node(35)at(2.4,-0.4){$\bullet$};
    \node(36)at(2.4,-1){$\circ$};
    \node(44)at(3.2,0){\tiny{$\triangle$}};
    \node(41)at(3.2,1.4){\tiny{$\square$}};
    \node(42)at(3.2,2){\tiny{$\diamondsuit$}};
    \node(53)at(4,1){\tiny{$\bigtriangledown$}};
    \node(55)at(4,-0.4){$\circ$};
    \node(56)at(4,-1){$\bullet$};
     \node(64)at(4.8,0){$\cdots$};
    \node(61)at(4.8,1.4){$\cdots$};
    \node(62)at(4.8,2){$\cdots$};
    \draw[->](q3)to(01);
    \draw[->](q3)to(02);
    \draw[->](04)to(13);
    \draw[->](04)to(15);
    \draw[->](04)to(16);
    \draw[<-](04)to(q3);
    \draw[<-](04)to(q5);
    \draw[<-](04)to(q6);
    \draw[<-](24)to(13);
    \draw[<-](24)to(15);
    \draw[<-](24)to(16);
    \draw[->](13)to(21);
    \draw[->](13)to(22);
    \draw[<-](13)to(01);
    \draw[<-](13)to(02);
    \draw[->](24)to(35);
    \draw[->](24)to(36);
    \draw[->](24)to(33);
    \draw[->](21)to(33);
    \draw[->](22)to(33);
    \draw[<-](44)to(35);
    \draw[<-](44)to(36);
    \draw[<-](44)to(33);
    \draw[<-](41)to(33);
    \draw[<-](42)to(33);
     \draw[->](44)to(55);
    \draw[->](44)to(56);
    \draw[->](44)to(53);
    \draw[->](41)to(53);
    \draw[->](42)to(53);
    \draw[<-](64)to(55);
    \draw[<-](64)to(56);
    \draw[<-](64)to(53);
    \draw[<-](61)to(53);
    \draw[<-](62)to(53);
\end{tikzpicture}
    \end{center}
    \caption{The AR quiver of $\textup{vect}\mbox{-}\mathbb{X}(2,2,3)$}\label{AR1}
\end{figure}
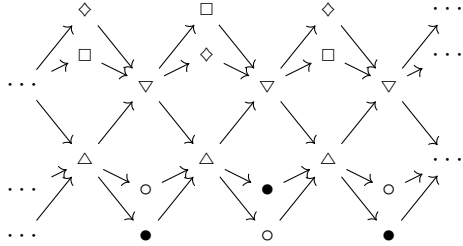

The $\mathbb{Z}\vec{z}$-orbits of $\mathcal{O},\mathcal{O}(\vec{x}),\mathcal{O}(\vec{y}),\mathcal{O}(\vec{x}+\vec{y})$ are marked by the symbols $\bullet,\tiny{\diamondsuit},\tiny{\square},\circ$, respectively, and the $\mathbb{Z}\vec{z}$-orbits of 2 rank-two indecomposable vector bundles are marked by $\tiny{\triangle}$ and $\tiny{\bigtriangledown}$. They form the  projective-injective objects. The quotient quiver $\Gamma(\textup{MCM}^{L}(S))/[\mathbb{Z}\vec{z}]$ is  Figure~\ref{AR2}, which is also the Auslander-Reiten quiver of $\textup{MCM}^{V_4}(\widehat{S})$.
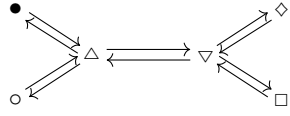
\begin{figure}[h]
    \begin{center}
\begin{tikzpicture}
    \node(02)at(2.5,0.6){\tiny{$\diamondsuit$}};
    \node(01)at(2.5,-0.6){\tiny{$\square$}};
    \node(04)at(0,0){\tiny{$\triangle$}};
    \node(13)at(1.5,0){\tiny{$\bigtriangledown$}};
    \node(15)at(-1,-0.6){$\circ$};
    \node(16)at(-1,0.6){$\bullet$};
    \draw[->](0.2,0.07)--(1.3,0.07);
    \draw[<-](0.2,-0.07)--(1.3,-0.07);
    \draw[->] (-0.87,-0.46) -- (-0.18,-0.02);
    \draw[->] (-0.15,-0.12) -- (-0.82,-0.56);
    \draw[<-] (-0.87,0.46) -- (-0.18,0.02);
    \draw[<-] (-0.15,0.12) -- (-0.82,0.56);
    \draw[->] (2.37,0.46) -- (1.68,0.02);
    \draw[->] (1.65,0.12) -- (2.32,0.56);  
    \draw[<-] (2.37,-0.46) -- (1.68,-0.02);
    \draw[<-] (1.65,-0.12) -- (2.32,-0.56);  
\end{tikzpicture}
    \end{center}
    \caption{The AR quiver of $\textup{MCM}^{V_4}(\mathbb{K}[[x, y,z]]/(x^2+y^2+z^3))$}\label{AR2}
\end{figure}

Recall the Frobenius quotient $\textup{MCM}^{V_4}(\widehat{S})\rightarrow {\rm mod}\mbox{-}\mathbb{K}[z]/(z^3)$. Therefore, the Auslander-Reiten quiver of ${\rm mod}\mbox{-}\mathbb{K}[z]/(z^3)$ is obtained from the one of $\textup{MCM}^{V_4}(\widehat{S})$ by deleting the projective-injective objects in the essential kernel. These objects are precisely the ones marked by the symbols $\bullet,\tiny{\diamondsuit},\tiny{\square}$. 

\begin{figure}[h]
    \begin{center}
        \begin{tikzpicture}
          \node()at(0,0){\tiny{$\triangle$}};
          \node()at(1.2,0){\tiny{$\bigtriangledown$}};
          \node()at(-1.2,0){{$\circ$}};
          \draw[->] (-1,0.07)--(-0.2,0.07);
          \draw[->] (-1,0.07)--(-0.2,0.07);
          \draw[<-] (-1,-0.07)--(-0.2,-0.07);
          \draw[<-] (-1,-0.07)--(-0.2,-0.07);
          \draw[<-] (1,0.07)--(0.2,0.07);
          \draw[<-] (1,0.07)--(0.2,0.07);
          \draw[->] (1,-0.07)--(0.2,-0.07);
          \draw[->] (1,-0.07)--(0.2,-0.07);
      \end{tikzpicture}
    \end{center}
    \caption{The AR quiver of $\textup{mod}\mbox{-}\mathbb{K}[z]/(z^3)$}
    \label{AR3}
\end{figure}
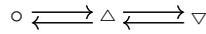
Consequently, the Auslander-Reiten quiver of ${\rm mod}\mbox{-}\mathbb{K}[z]/(z^3)$ is given by the double quiver of $A_{3}$, as shown in Figure \ref{AR3}. Here, the projective-injective module corresponds to the symbol $\circ$.

}
\end{exm}

\vskip 5pt

\noindent {\bf Acknowledgements.} The authors are very grateful to Dr. Xiaofa Chen for pointing out the references \cite{Bar,War}. 
This project is supported by National Key R$\&$D Program of China (No. 2024YFA1013801), the National Natural Science Foundation of China (No.s 12325101, 12131015, 12301054,  12271448 and 12471035)  and the Fujian Provincial Natural Science Foundation of China (No. 2024J010006).

\bibliography{}

\vskip 10pt

 {\footnotesize \noindent  Xiao-Wu Chen\\
 School of Mathematical Sciences, University of Science and Technology of China, Hefei 230026, Anhui, PR China}

\vskip 3pt

{\footnotesize \noindent Qiang Dong\\
School of Mathematics and Statistics, Fujian Normal University, Fuzhou 350117, Fujian, PR China}

\vskip 3pt

{\footnotesize \noindent Shiquan Ruan\\
School of Mathematical Sciences, Xiamen University, Xiamen 361005, PR China}

\end{document}